\newcommand{\z}{\mathbb Z} 
\newcommand{\n}{\mathbb N} 
\newcommand{\q}{\mathbb Q} 
\newcommand{\f}{\mathbb F} 
\newcommand{\zp}{{\mathbb Z}_p} 
\newcommand{\qp}{{\mathbb Q}_p} 
\newcommand{\op}{{\mathcal O}}
\newcommand{\kp}{\kappa} 
\newcommand{\ip}{{\mathfrak m}}
\newcommand{\fp}{{{\mathbb F}_p}}
\newcommand{\ipalg}{\ip^\mathrm{alg}}
\newcommand{\Nott}{\mathrm{Nott}}
\newcommand{\Aut}{\mathrm{Aut}} 
\newcommand{\Gal}{\mathrm{Gal}} 
\newcommand{\End}{\mathrm{End}}
\newcommand{\Norm}{\mathrm{Norm}}
\newcommand{\Normsep}{\mathrm{Norm}^\mathrm{sep}}
\newcommand{\wideg}{\mathrm{ord}_x}
\newcommand{\snc}{\mathcal{S}_0} 
\newcommand{\gnc}{\mathcal{G}_0}
\newcommand{\suchthat}{\; | \;}
\newcommand{\G}{\mathfrak G}
\newcommand{\ug}{\mathfrak u}
\newcommand{\zg}{\mathfrak w}
\newcommand{\anyg}{\mathfrak g}
\newcommand{\anyh}{\mathfrak h}
\newcommand{\cala}{\mathcal A}
\newcommand{\Newt}{\mathcal N}
\newcommand{\E}{\mathcal E}
\newcommand{\Llf}{\Lambda_f}
\newcommand{\Llu}{\Lambda_u}
\newcommand{\Oof}{\Omega_f}
\newtheorem{thm}{Theorem}[section] 
\newtheorem{lemma}[thm]{Lemma}     
\newtheorem{cor}[thm]{Corollary} \newtheorem{prop}[thm]{Proposition} 
\newtheorem{mainthm}[thm]{Main Theorem}     
\newtheorem{remark}[thm]{Remark}
\title{Galois extensions of height-one commuting dynamical systems} 
\author{Ghassan Sarkis}
\address{Pomona College\\610 N. College Ave.\\Claremont, CA 91711}
\email{ghassan.sarkis@pomona.edu}
\author{Joel Specter}
\address{Wesleyan University\\265 Church St.\\Middletown, CT 06459-0128}
\email{jspecter@wesleyan.edu}
\thanks{The authors were partially supported by NSF Grant DMS-0755540.}
\subjclass[2010]{11S31, 37P20 (primary),  14L05, 11S15 (secondary)}
\begin{document} 

\begin{abstract} We consider a dynamical system consisting of a pair of commuting power series, one noninvertible and another nontorsion invertible, of
height one with coefficients in the $p$-adic integers. Assuming that each point of the dynamical system generates a Galois extension over the base field, we show that these extensions are in fact abelian, and, using results and considerations from the theory of the field of norms, we also show that the dynamical system must include a torsion series of maximal order. From an earlier result, this shows that the series must in
fact be endomorphisms of some height-one formal group. \end{abstract}

 \maketitle




\section{Introduction} The study of $p$-adic dynamical systems has seen increased interest over the past two decades,
reflected most recently in a new MSC category: {\sl Arithmetic and non-Archimedean dynamical systems}. This note is
concerned with three overlapping ways of looking at such systems---formal power series that commute under composition, iterated morphisms of
the open $p$-adic unit disc, and galoisness of extensions that are obtained by adjoining zeros of dynamical systems. 
Indeed, the proof of the main result in this note can be viewed as relating commuting power series to formal groups, analytic maps of the open unit disk to locally analytic galois automorphisms, and galois towers to automorphism subgroups of residue fields.

\subsection{Notation and motivation} Our power series have no constant term in order for composition to be well defined and finitary. Also, their linear coefficients are nonzero to exclude trivial cases. We adopt therefore some of the notation of \cite{Lubin:Nads}.  Let $g^{\circ n}$ be the $n$-fold iterate of $g$ with itself under composition. For a commutative ring $R$, let $\snc(R)=\{g\in R[\![x]\!]\suchthat g(0)=0\;\mathrm{and}\; g'(0)\neq 0\}$.
Let $\gnc(R)=\{g\in\snc(R)\suchthat g'(0)\in R^\times\}$ be the group of series that are invertible under composition.

Suppose $F$ is a finite extension of $\qp$ with ring of integers $\op$, maximal ideal $\ip$, and residue field $\kp=\op/\ip$.  Denote by $v_F$ the unique additive valuation on any algebraic extension of $F$ normalized so that $v_F(F^\times)=\z$; for simplicity, $v_p$ will be used instead of $v_{\qp}$.  Let $\ipalg$ be the maximal ideal in the integral closure of $\op$ in $F^\mathrm{alg}$, an algebraic closure of $F$.

The {\sl Newton Polygon} of $g(x)=\sum a_i x^i\in\op[\![x]\!]$, denoted $\Newt(g)$, is the convex hull of the sequence of points $(i,v_F(a_i))$. If $\Newt(g)$ has a segment of horizontal length $\ell$ and slope $\lambda$, then $g$ has, counting multiplicity, precisely $\ell$ roots in $F^\mathrm{alg}$ of $F$-valuation $\lambda$.  We are interested  in roots that lie in $\ipalg$; these correspond to segments of the Newton polygon of negative slope.  To that end, we define $\Newt^-(g)$ to be the portion of $\Newt(g)$ consisting of segments whose slopes are negative.

Let $\bar{g}(x)=\sum\bar{a}_ix^i\in\kp[\![x]\!]$ be the coefficientwise reduction of $g$ to $\kp$. The {\sl Weierstrass degree} of $\bar{g}$, denoted $\wideg(\bar{g})$, is defined to equal $\infty$ if $\bar{g}=0$, and $\min\{i\suchthat \bar{a}_i\neq 0\}$ otherwise; the Weierstrass degree of $g$ is defined to equal $\wideg(\bar{g})$.
The {\sl $p$-adic Weierstrass Preparation Theorem} (WPT) asserts that if $\wideg(g)<\infty$ then there exists a unique factorization $g(x)=P(x)U(x)$, where $P(x)\in\op[x]$ is monic of degree $\wideg(g)$ and $U(x)\in\op[\![x]\!]$ has a multiplicative inverse, and hence no zeroes in $\ipalg$. Moreover, the roots of $P(x)$ and $g(x)$ in $\ipalg$ coincide; consequently, so do $\Newt(P)$ and $\Newt^-(g)$. See \cite[Chapter IV]{Koblitz} for a more details on Newton polygons.

If $f,u\in\snc(\op)$ such that $f$ is noninvertible and $u$ is invertible, let 
$$\begin{array}{rclcrcl}
\Llf(n)&=&\{\pi\in\ipalg\suchthat f^{\circ n}(\pi)=0\}&\quad\mathrm{and}\quad& \Llf&=&\cup_{n\geq 0}\Llf(n);\\
 \Llu(n)&=&\{\pi\in\ipalg\suchthat u^{\circ p^n}(\pi)=\pi\} &\quad\mathrm{and}\quad& \Llu&=&\cup_{n\geq 0}\Llu(n).
 \end{array}$$
Observe that $n\geq 0$, $\Llf(n+1)\backslash\Llf(n)$ consists of roots of $f(x)-\pi$ as $\pi$ ranges through $\Llf(n)\backslash\Llf(n-1)$.  Let $\Oof(n)=\Llf(n)\backslash\Llf(n-1)$.
 
Although formal groups are not explicitly visible in our results, they provide part of the motivation, which we discuss briefly next. We will call $f,u\in\snc(\op)$ a {\sl commuting pair} if $f$ is noninvertible and $u$ is nontorsion invertible.
Commuting pairs share certain characteristic properties with formal group endomorphisms. For example,  $\Llf=\Llu$ by \cite[Proposition 3.2]{Lubin:Nads}.  Also, $\wideg(f)$ is either infinite or a power of $p$ by \cite[Main Theorem 6.3]{Lubin:Nads}. Both of these results are important properties of formal group endomorphisms.
Thus, Lubin suggested that commutativity may be enough to indicate the existence of ``a formal group somehow in the background'' \cite[page 341]{Lubin:Nads}.   Counterexamples to na\"ive statements and proofs of special cases of this conjecture are both known, though a general case remains elusive.  
For a more detailed discussion of the issues involved in a precise statement of the conjecture, see \cite{Sarkis:HeightOne}.

Let $e=p-1$ if $p>2$ and $e=2$ if $p=2$.  The following special case of Lubin's conjecture, proven in \cite[Theorem 1.1]{Sarkis:HeightOne}, makes use of a torsion third series of order $e$ in the dynamical system: 

\begin{thm} Let $f,u,z\in\snc(\zp)$ such that $f,u$ is a commuting pair with $\wideg(f)=p$ and $v_p(f'(0))=v_p(u'(0)-1)=1$, and if $p=2$ then additionally $v_2(u'(0)^2-1)=3$. Suppose also that $z$ is torsion of order $e$ and commutes with $f$.  Then there exists a formal group $F$ over $\zp$ such that $f,u,z\in\End_{\zp}(F)$.
\end{thm}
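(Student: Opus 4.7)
My approach would be to construct the formal group $F$ via its logarithm, leveraging $z$ to simplify the coordinates and the commutation of $u$ with $f$ to confirm that $u$ and $z$ are endomorphisms of the resulting $F$.

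\emph{Normalizing $z$.} Because $\gcd(e,p)=1$, a Newton-style iteration (the finite-order case of the general linearization lemma) produces $\phi \in \gnc(\zp)$ such that $\phi^{-1} \circ z \circ \phi(x) = \zeta x$, where $\zeta := z'(0)$ is a primitive $e$-th root of unity in $\zp$. Conjugating the triple $(f,u,z)$ by $\phi$, I may assume $z(x)=\zeta x$. The relations $f\circ z = z\circ f$ and $u\circ z = z\circ u$ then force $f(\zeta x)=\zeta f(x)$ and similarly for $u$, so both series are $\zeta$-equivariant: only monomials of degree $\equiv 1\pmod e$ appear. Together with the hypotheses, this gives $f(x)=\pi x + b\,x^p + \cdots$ with $\pi := f'(0)$ a uniformizer of $\zp$ and $b\in\zp^\times$.

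\emph{Constructing $F$.} I would solve the linearization equation $\log_F(f(x)) = \pi \cdot \log_F(x)$ for $\log_F \in x + x^2\,\qp[\![x]\!]$. Matching coefficients on both sides, writing $\log_F(x)=\sum_{n\geq 1} c_n x^n$ with $c_1=1$, gives a recursion in which $c_n$ is determined by $c_1,\ldots,c_{n-1}$ and the coefficients of $f$, with overall denominator $\pi^n - \pi = \pi(\pi^{n-1}-1)$. Since $v_p(\pi)=1$, the accumulated $p$-adic denominator grows at most linearly, so $\log_F$ lies in $\qp[\![x]\!]$. The candidate formal group law is then $F(X,Y) := \exp_F(\log_F(X) + \log_F(Y))$, and I would verify $F \in \zp[\![X,Y]\!]$. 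The integrality should follow from the $\zeta$-equivariance together with the specific shape $\wideg(f)=p$, which force a Honda-type relation on the $c_n$.

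\emph{Realizing $u$ and $z$ as endomorphisms.} Once $F$ is built, $\log_F\circ u$ satisfies the same linearization identity $(\log_F\circ u)(f(x)) = \pi\,(\log_F\circ u)(x)$ (because $u$ commutes with $f$), and by uniqueness of solutions with prescribed linear term, $\log_F\circ u = u'(0)\cdot\log_F$. This shows $u\in\End_{\qp}(F)$; since $u\in\zp[\![x]\!]$, in fact $u\in\End_{\zp}(F)$. The same argument applies to $z$, giving $z=[\zeta]_F\in\End_{\zp}(F)$.

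\emph{Main obstacle.} The crux is the integrality of $F$ in the second step; once this is settled, the commuter characterization is essentially automatic. The hypotheses $v_p(u'(0)-1)=1$ and, for $p=2$, $v_2(u'(0)^2-1)=3$ should enter precisely at the integrality step by ruling out the degeneracy in which $u$ fails to generate a topologically dense subgroup of $1+p\zp \subset \End_{\zp}(F)^\times$; the $p=2$ complication reflects the noncyclic structure of $\zp^\times$ for $p=2$, forcing a finer hypothesis there.
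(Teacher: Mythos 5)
This theorem is not actually proved in the present paper---it is quoted verbatim from \cite[Theorem 1.1]{Sarkis:HeightOne}---so there is no in-paper proof to compare against. Judged on its own terms, your argument has two genuine gaps.

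\emph{The linearization step fails when $p=2$.} You invoke $\gcd(e,p)=1$ to conjugate $z$ to $\zeta x$ via the averaging map $\phi=\frac{1}{e}\sum_{i=0}^{e-1}\zeta^{-i}z^{\circ i}$. But for $p=2$ the statement takes $e=2=p$, so $1/e\notin\z_2$ and $\phi$ is not integral; moreover an order-$2$ element of $\gnc(\z_2)$ is a wild automorphism and is in general not $\z_2[\![x]\!]$-conjugate to $-x$. For instance $z(x)=\frac{-x}{1+x}=-x+x^2-x^3+\cdots$ has order $2$, but if $\phi\circ z=(-x)\circ\phi$ with $\phi(x)=c_1x+c_2x^2+\cdots$, comparing degree-$2$ coefficients gives $c_1=-2c_2$, forcing $c_1\notin\z_2^\times$. (That same $z$ \emph{is} $[-1]$ on the multiplicative formal group, which shows the right conclusion does not require linearizing $z$.) The special condition $v_2(u'(0)^2-1)=3$ in the hypotheses is precisely a symptom of this $p=2$ wildness, and your argument never engages with it.

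\emph{The integrality of $F$ is asserted, not proved.} Even for $p>2$, where the linearization is legitimate over $\zp$, the crux is your claim that $F(X,Y)=\exp_F(\log_F(X)+\log_F(Y))$ lies in $\zp[\![X,Y]\!]$. Knowing $\log_F\in\qp[\![x]\!]$ and is supported in degrees $\equiv 1\pmod{p-1}$ does not by itself give integrality of the group law; one would need to actually exhibit a Honda-type functional equation $\log_F(x)\equiv\frac{1}{p}\log_F(cx^p)\pmod{\zp[\![x]\!]}$ and verify it from $f$, and you only gesture at this ("should follow\ldots force a Honda-type relation"). Notice too that the hypotheses on $u$ ($v_p(u'(0)-1)=1$, plus the $p=2$ refinement) never appear in your argument except in a closing remark about what they "should" rule out; since they appear in the statement, one expects them to be load-bearing somewhere in the integrality argument, which is further evidence the key step is missing.
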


It is a straightforward corollary to this theorem that $\qp(\pi)/\qp$ is Galois for all $\pi\in\Llf$. We will show that, conversely, the Galoisness of $\qp(\pi)/\qp$ is sufficient to guarantee the existence of the torsion series. 

Let $\delta=1$ if $p>2$ and $\delta=2$ if $p=2$.  We will call a commuting pair $f,u\in\snc(\zp)$ {\sl minimal} when 
\begin{itemize}
\item $\wideg(f)=p$;
\item $v_p(f'(0))=1$; and
\item $v_p(u'(0)-1)=\delta$.
\end{itemize}
Note that the condition on $v_p(u'(0)-1)$ when $p=2$ is slightly different than the one in \cite{Sarkis:HeightOne}, though the two are equivalent in the contexts we consider. 
Our main result is the following:

\begin{mainthm}\label{thm:main} Suppose $f,u\in\snc(\zp)$ is a minimal commuting pair.  If $\qp(\pi)/\qp$ is Galois for each $\pi\in\Llf$, then there exists a torsion series $z\in\snc(\zp)$ of order $e$ commuting with $f$ and $u$.
\end{mainthm}

\begin{remark}\label{remark:proofplan}{\rm By \cite[Propositions 1.1 and 1.2]{Lubin:Nads}, there exists a unique power series $L_f(x)\in\qp[\![x]\!]$ for which $L_f'(0)=1$ and $L_f\circ f=f'(0)L_f$. And for each $a\in\qp$ there exists a unique $[a]_f\in\qp[\![x]\!]$ such that $[a]_f'(0)=a$ and $[a]_f\circ f=f\circ [a]_f$.  Let $\zeta_e$ be a primitive $e^\mathrm{th}$ root of unity, and let $z=[\zeta_e]_f\in\qp[\![x]\!]$.  In order to prove our main result, we need show only that  $z\in\zp[\![x]\!]$.
}\end{remark}


\section{Roots and fixed points}


The integrality of the torsion series $z$ will rely on the existence of a torsion series of the same order in $\gnc(\fp)$ that commutes with $\bar f$ and $\bar u$.  More generally, it will rely on information about the structure of the normalizer of $\bar u$ in $\gnc(\fp)$ that follows from the theory of the fields of norms.
We begin this section with a brief overview of fields of norms, culminating in \cite[Th\'eor\`eme 5.9]{LMS}.  

Suppose $K$ is a local field with residue field $\kp$ of characteristic $p>0$, $L/K$ is an infinite totally ramified abelian extension, and $\E_{L/K}=\{E\suchthat K\subseteq E\subset L\;\mathrm{and}\;[E:K]<\infty\}$.  Let $\displaystyle{X_K(L)^*=\lim_{\longleftarrow\atop E\in\E_{L/K}} E^*}$, where the transition morphisms are the norm maps, and let $X_K(L)=X_K(L)^*\cup\{0\}$.  An element $\alpha$ of $X_K(L)$ is therefore an indexed family $(\alpha_E)_{E\in\E_{L/K}}$ such that whenever $E'\supseteq E$ then $N_{E'/E}(\alpha_{E'})=\alpha_E$.  Remarkably, the set of norms $X_K(L)$ turns out to be isomorphic to the local field $\kp((x))$. Multiplication is defined in an obvious way in light of the multiplicativity of the norm map: if $\alpha,\beta\in X_L(K)$ then $\alpha\beta=(\alpha_E\beta_E)E\in\E_{L/K}$. Addition is not as obvious: for $E\in\E_{L/K}$, the norms $N_{E'/E}(\alpha_{E'}+\beta_{E'})$ converge over $E'\in\E_{L/E}$ to an element $\gamma_E\in E$, using which one can define $\alpha+\beta=(\gamma_E)_{E\in\E_{L/K}}$.  Finally, $v_{X_{L/K}}(\alpha)$ can be defined as $v_E(\alpha_E)$, which is independent of $E$.  The image of $\Gal(L/K)$ under $X_K(*)$ is a closed subgroup of the automorphism group of $\kp((x))$.


Conversely, certain subgroups of the automorphism group of $\kp((x))$ arise as images of Galois groups under the field-of-norms functor.  The {\sl Nottingham group} $\Nott(\kp)=\{g(x)\in\snc(\kp)\suchthat g'(0)=1\}$ has elicited interest among group theorists because every countably-based pro-$p$ groups can be embedded in it, and as such, it contains elements of order $p^n$ for all $n$; the shape of such torsion elements will be important for the proof of our main result when $p=2$.   The Nottingham group is also the group of normalized automorphisms of the field $\kp((x))$. Some of its subgroups, like the one generated by $\bar u$, arise as images of Galois groups under the field-of-norms functor.  See \cite{Camina,Klopsch} for more information about the Nottingham group, \cite{FW1,FW2} for the original construction of the field of norms,  and \cite{LMS} for applications of the field of norms to $p$-adic dynamical systems that we make use of next.


If $\omega\in\Nott(\kp)$, let $i_n(\omega)=\wideg(\omega^{\circ p^n}(x)-x)-1$. This sequence of integers, called the {\sl lower ramification numbers} of $\omega$, measures the rapidity with which $\omega^{\circ p^n}$ approaches the identity.  According to Sen's Theorem \cite{Sen}, if $\omega^{\circ p^n}(x)\neq x$ then $i_n(\omega)\equiv i_{n-1}(\omega)$ mod $p^n$.
Let $e(\omega)=\lim_{n\to\infty}(p-1)i_n/p^{n+1}$. In light of Sen's Theorem, $e(\omega)$ is finite in those cases when $\omega^{\circ p^n}$ approaches the identity as slowly as possible.  Note that the factor $p-1$ normalizes $e(\omega)$ so that, when finite, it is an integer.

Let $\cala_\omega=\{\omega^{\circ a}\suchthat a\in\zp\}$ be the closed subgroup of $\snc$ generated by $\omega$.  The {\sl separable normalizer} of $\cala_\omega$ is given by $\Normsep_\kp(\cala_\omega)=\{\vartheta\in\kp[\![x]\!]\suchthat \vartheta'\neq 0\;\mathrm{and}\;\vartheta\circ \omega\circ\vartheta^{\circ -1}\in\cala\}$. By \cite[Proposition 5.5]{LMS}, $\Normsep_\kp(\cala_\omega)$ is in fact a group, and \cite[Th\'eor\`eme 5.9]{LMS}, quoted next, characterizes this group in the case that $\omega$ approaches the identity slowly.

\begin{thm}\label{thm:LMS}
Let $\kp$ be a perfect field and $\omega\in\snc(\kp)$ such that $e(\omega)<\infty$. Then $\Normsep_\kp(\cala_\omega)$ is an extension of a finite group of order dividing $e(\omega)$ by the group $\cala_\omega$.
\end{thm}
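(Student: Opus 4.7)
The plan is to recast the separable normalizer through the field-of-norms equivalence reviewed above, turning the dynamical statement into a Galois-theoretic one. The finiteness of $e(\omega)$, combined with Sen's theorem, is precisely what makes the closed cyclic subgroup $\cala_\omega \cong \zp$ act on $\kp((x))$ as the Galois group of an arithmetically profinite extension: the fixed field $\kp((x))^{\cala_\omega}$ is a local field of characteristic $p$ with residue field $\kp$, and $\kp((x))/\kp((x))^{\cala_\omega}$ is a totally ramified $\zp$-extension whose lower ramification filtration is controlled by the integers $i_n(\omega)$. Since $\kp$ is perfect, the Fontaine--Wintenberger equivalence lifts this to a totally ramified arithmetically profinite $\zp$-extension $L/K$ of local fields of characteristic zero, with a canonical isomorphism $X_K(L) \cong \kp((x))$ intertwining the $\Gal(L/K)$-action with the $\cala_\omega$-action.

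With this dictionary in place, I would translate the normalizer condition. An element $\vartheta \in \Normsep_\kp(\cala_\omega)$ induces a continuous field automorphism of $\kp((x))$ (separability being encoded by $\vartheta' \neq 0$) that normalizes $\cala_\omega$. Functoriality of the field-of-norms construction lifts $\vartheta$ to an automorphism $\tilde\vartheta$ of $L$ normalizing $\Gal(L/K)$, which therefore preserves the fixed field $K$ setwise. Restriction to $K$ yields a homomorphism $\Normsep_\kp(\cala_\omega) \to \Aut(K)$ whose kernel is the set of automorphisms of $L$ fixing $K$ pointwise, i.e., $\Gal(L/K) \cong \cala_\omega$. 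This produces the short exact sequence
\[
1 \longrightarrow \cala_\omega \longrightarrow \Normsep_\kp(\cala_\omega) \longrightarrow H \longrightarrow 1,
\]
where $H \subseteq \Aut(K)$. Since $\qp$ has no nontrivial automorphisms and $[K:\qp] < \infty$, the group $\Aut(K) = \Aut(K/\qp)$ is finite, so $H$ is finite.

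The final and most delicate step is showing $|H|$ divides $e(\omega)$. Here I would pass to the ramification filtration of $L/K$. An element of $H$ lifts to an automorphism of $L$ that must permute the ramification breaks, and by definition the asymptotic growth $i_n(\omega) \sim e(\omega)\, p^{n+1}/(p-1)$ governs these breaks. The Hasse--Arf theorem converts the lower-numbering breaks into integers in upper numbering, and faithfulness of the $H$-action on a canonical invariant of $K$ pinned down by $e(\omega)$ (essentially a distinguished uniformizer class or subfield carved out by the ramification data) yields the desired divisibility. The routine part of the argument is the dictionary supplied by the field-of-norms equivalence; the real obstacle is the ramification-theoretic bookkeeping that extracts the sharp bound $|H| \mid e(\omega)$ from that dictionary.
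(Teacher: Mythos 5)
Your opening dictionary is the right one and essentially matches the paper's: lift $\cala_\omega$ through the field-of-norms functor to $\Gal(L/K)$ for an abelian totally ramified APF extension $L/K$ in characteristic zero, lift an element of $\Normsep_\kp(\cala_\omega)$ to an automorphism of $L$ normalizing $\Gal(L/K)$ (hence preserving $K$), and extract the short exact sequence with kernel $\Gal(L/K)\cong\cala_\omega$ and quotient $H$ landing in automorphisms of $K$. So far so good.

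The gap is in the bound $|H|\mid e(\omega)$, and it is not a matter of ``ramification-theoretic bookkeeping'' as you suggest; it is a single clean identification that you miss and instead replace with a vague appeal to Hasse--Arf and a ``distinguished uniformizer class.'' The paper's point is this: under the field-of-norms correspondence, $e(\omega)$ is the \emph{absolute ramification index} of $K$, i.e.\ $e(\omega)=[K:K_0]$ where $K_0$ is the fraction field of the Witt ring $W(\kp)$. The elements of $\Normsep_\kp(\cala_\omega)$ are $\kp$-linear (they fix the coefficient field), so their lifts to $\Aut(L)$ act trivially on the residue field, and therefore the restricted automorphisms of $K$ fix $K_0$ pointwise. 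Thus $H$ is a subgroup of $\Aut(K/K_0)$, whose order divides $[K:K_0]=e(\omega)$ because it acts faithfully on $K$ with fixed field an intermediate field of $K/K_0$. Your argument, by contrast, asserts $\Aut(K)=\Aut(K/\qp)$ is finite because $[K:\qp]<\infty$ — but the theorem allows $\kp$ to be any perfect field, not just $\fp$, in which case $K$ is not a finite extension of $\qp$ at all; and even in the $\kp=\fp$ case you never connect $[K:\qp]$ to $e(\omega)$, so the divisibility claim is left unproved. The key missing idea is the identification $e(\omega)=[K:K_0]$ together with the observation that $H$ acts trivially on the residue field; once you have those, there is nothing ``delicate'' left to do.
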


\begin{proof}[Proof Outline] The field-of-norms functor associates to $\cala_\omega$ the group $\Gal(L/K)$, where $L/K$ is an infinite abelian totally ramified extension of local fields with residue field $\kp$.  As such, the lower ramification numbers of $\bar{u}$ correspond to the lower ramification numbers of $\Gal(L/K)$, and $e(\omega)$ corresponds to the absolute ramification index of $K$, namely, $[K:K_0]$, where $K_0$ is the field of fractions of the Witt ring over $\kp$.  Let $\Aut(L/K)$ be the group of automorphisms of $L$ that send $K$ to itself and that act trivially on the residue field. The restriction map $\Aut(L/K)\to\Aut(K)$ has kernel $\Gal(L/K)$.  The order of its image divides $[K:K_0]$. \end{proof}

\subsection{The lower ramification numbers of $\bar u$} For the remainder of his section, we will consider the roots of $f$ and its iterates in order to show that $e(\bar u)=e=\begin{cases}p-1&\mathrm{if}\;p>2\\2&\mathrm{if}\;p=2\end{cases}$, and hence apply Theorem \ref{thm:LMS} to $\cala_{\bar u}$.  

Continue to denote by $F$ a finite extension of $\qp$ with ring of integers $\op$, maximal ideal $\ip$, and residue field $\kp$.


\begin{lemma}\label{lemma:totram}
Suppose $f\in\snc(\op)$ such that $\wideg(f)=p$ and $v_F(f'(0))=1$.  Then the roots of $f^{\circ n}$ in $\ipalg$ are simple for all $n$. Moreover, if $\pi\in\Oof(n)$ then  $F(\pi)/F$ is a totally ramified extension of degree $(p-1)p^{n-1}$, and $\pi$ is a uniformizer in $F(\pi)$.
\end{lemma}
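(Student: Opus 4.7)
My plan is to induct on $n$, proving simultaneously that each $\pi\in\Oof(n)$ is a simple root of $f^{\circ n}$, that $F(\pi)/F$ is totally ramified of degree $(p-1)p^{n-1}$, and that $\pi$ is a uniformizer of $F(\pi)$.

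For the base case $n=1$, I would read the Newton polygon of $f$ directly from the hypotheses: $v_F(a_1)=1$, $v_F(a_p)=0$ (since $\wideg(f)=p$), and $v_F(a_i)\geq 1$ for $1\leq i<p$, so $\Newt^-(f)$ is the single segment from $(1,1)$ to $(p,0)$ of slope $-1/(p-1)$. Consequently $f$ has exactly $p-1$ nonzero roots in $\ipalg$, each of $F$-valuation $1/(p-1)$. For $\pi\in\Oof(1)$, the denominator $p-1$ forces $e(F(\pi)/F)\geq p-1$, while $\pi$ is a root of $P(x)/x$ (where $P$ is the Weierstrass polynomial of $f$), so $[F(\pi):F]\leq p-1$. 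Equality throughout gives a totally ramified extension of degree $p-1$ with $\pi$ as uniformizer, so the minimal polynomial of $\pi$ over $F$ is $P(x)/x$ itself; separability in characteristic zero then yields simplicity.

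For the inductive step, given $\pi\in\Oof(n)$ set $\pi'=f(\pi)\in\Oof(n-1)$, so by hypothesis $v_F(\pi')=1/((p-1)p^{n-2})$ with $F(\pi')/F$ totally ramified of that degree and $\pi'$ a uniformizer. Viewing $f(x)-\pi'$ as a series over $\op_{F(\pi')}$, its Newton polygon acquires the left endpoint $(0,v_F(\pi'))$; a direct comparison using $v_F(\pi')\leq 1/(p-1)$ together with $v_F(a_i)\geq 1$ for $1\leq i<p$ shows that the segment from $(0,v_F(\pi'))$ to $(p,0)$ is a single edge of $\Newt^-(f(x)-\pi')$. Hence $f(x)-\pi'$ has exactly $p$ roots in $\ipalg$, each of valuation $v_F(\pi')/p=1/((p-1)p^{n-1})$. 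The same ramification-versus-degree sandwich, now applied over $F(\pi')$ using the Weierstrass polynomial of $f(x)-\pi'$, forces $[F(\pi):F(\pi')]=p$ totally ramified with $\pi$ a uniformizer; composing with $F(\pi')/F$ gives the claim over $F$. Separability in characteristic zero again forces $\pi$ to be a simple root of $f(x)-\pi'$, i.e.\ $f'(\pi)\neq 0$, and the chain rule $(f^{\circ n})'(\pi)=\prod_{k=0}^{n-1}f'(f^{\circ k}(\pi))$ combined with $f'(0)=a_1\neq 0$ finishes the proof of simplicity for $f^{\circ n}$.

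The main obstacle is the inductive Newton polygon verification: one must confirm that no intermediate vertex $(i,v_F(a_i))$ with $1\leq i<p$ drops below the segment joining $(0,v_F(\pi'))$ to $(p,0)$. This relies simultaneously on the hypothesis $v_F(a_1)=1$ at the low end and on the fact that $v_F(\pi')$ shrinks by a factor of $p$ at each successive level, keeping the segment shallow enough to remain below $y=1$ throughout $1\leq i<p$.
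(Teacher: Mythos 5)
Your proof is correct and follows essentially the same route as the paper: induct on $n$, read off the Newton polygon of $f(x)-\pi'$ over $\op[\pi']$, and conclude that its Weierstrass factor is a degree-$p$ polynomial whose single Newton segment yields a totally ramified degree-$p$ step, then compose with the tower below. The paper phrases the conclusion by observing that $P_0(x)/x$ and $P_n(x)$ are Eisenstein, whereas you use a ramification-versus-degree sandwich, but these are interchangeable ways of expressing the same Newton-polygon argument.
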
	

\begin{proof}  Using WPT, write $f(x)=P_0(x)U_0(x)$. Note that $\Oof(1)$ consists of the roots of $P_0(x)/x$.  By the hypothesis on $f$, $\Newt(P_0(x)/x)$ consists of a single segment from $(0,1)$ to $(p-1,0)$. So $P_0(x)/x$ is a degree $p-1$ Eisenstein polynomial over $\op$. Therefore, the roots of $P_0(x)/x$ are simple, each with $F$-valuation $1/(p-1)$, and each generating a degree $p-1$ totally ramified extension of $F$.

Proceeding by induction, assume that the result holds for some $n\geq 0$.  Let $\pi\in\Oof(n)$.  Using WPT again, write $f(x)-\pi=P_n(x)U_n(x)$, where $P_n(x)\in(\op[\pi])[x]$ is a polynomial whose Newton polygon consists of a single segment from $(0,1)$ to $(p,0)$; that is, $P_n(x)$ is a degree $p$ Eisenstein polynomial over $\op[\pi]$.  Thus, the roots of $f(x)-\pi$ are simple, each of $F(\pi)$-valuation $1/p$, and each generating a degree $p$ totally ramified extension of $F(\pi)$. Finally, if $\pi'\in\Oof(n)$ with $\pi'\neq\pi$, then the roots of $f(x)-\pi$ and $f(x)-\pi'$ are distinct.  \end{proof} 

\begin{lemma}\label{lemma:fandzeta}
Suppose $f\in\snc(\op)$ such that $\wideg(f)=p$ and $v_F(f'(0))=1$.  Let $\pi\in\Oof(1)$. Then $F(\pi)/F$ is Galois. Also, for a fixed primitive $p-1$ root of unity $\zeta_{p-1}$, and for each $0\leq i\leq p-2$, there exists a unique $\pi^{(i)}\in\Oof(1)$ such that $\pi^{(i)}\equiv\zeta_{p-1}^i\pi$ mod $\ip^2$.
\end{lemma}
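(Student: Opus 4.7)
My plan is to use Weierstrass preparation to reduce to an Eisenstein polynomial, rescale by the uniformizer $\pi$, and apply Hensel's lemma to the resulting residue.

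By WPT and Lemma \ref{lemma:totram}, the set $\Oof(1)$ is precisely the roots of $g(y) := P_0(y)/y$, a monic Eisenstein polynomial of degree $p-1$ over $\op$, and $\pi$ is a uniformizer of $F(\pi)$. Since the residue field $\kp$ contains $\fp$, the polynomial $z^{p-1}-1$ has $p-1$ distinct simple roots in $\kp$; by Hensel's lemma these lift to the $(p-1)$-th roots of unity in $\op$, so in particular $\zeta_{p-1}\in\op$ and $z^{p-1}-1$ already splits completely in $\op[z]$.

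Next, I substitute $y = \pi z$ into $g(y)$ and divide by $\pi^{p-1}$ to obtain a monic polynomial $h(z) \in \op_{F(\pi)}[z]$ of degree $p-1$, whose roots in $F^\mathrm{alg}$ are the ratios $\pi'/\pi$ for $\pi' \in \Oof(1)$. The key computation is that $h(z) \equiv z^{p-1} - 1 \pmod{\ip_{F(\pi)}}$: writing $g(y) = y^{p-1} + c_{p-2} y^{p-2} + \cdots + c_0$ with $v_F(c_i) \ge 1$ and $v_F(c_0) = 1$, the coefficient of $z^i$ in $h$ is $c_i/\pi^{p-1-i}$, whose $v_{F(\pi)}$ is $\ge i$ for $i \ge 1$; for the constant term, rearranging $g(\pi) = 0$ yields $c_0/\pi^{p-1} \equiv -1 \pmod{\ip_{F(\pi)}}$. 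Applying Hensel's lemma to the factorization of $\overline{h}(z) = z^{p-1}-1$ into distinct simple linear factors over $\kp$, I lift to $h(z) = \prod_{i=0}^{p-2}(z - \tilde\zeta_i)$ in $\op_{F(\pi)}[z]$ with $\tilde\zeta_i \equiv \zeta_{p-1}^i \pmod{\ip_{F(\pi)}}$.

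This yields both conclusions at once. All the roots $\pi^{(i)} := \pi\tilde\zeta_i$ of $g$ lie in $F(\pi)$, so $F(\pi)/F$ is Galois; and multiplying the congruence $\tilde\zeta_i \equiv \zeta_{p-1}^i \pmod{\ip_{F(\pi)}}$ through by the uniformizer $\pi$ yields $\pi^{(i)} \equiv \zeta_{p-1}^i \pi \pmod{\ip_{F(\pi)}^2}$, as required. Uniqueness of the index $i$ attached to each $\pi^{(i)}$ follows from the cyclotomic identity $\prod_{k=1}^{p-2}(1 - \zeta_{p-1}^k) = p - 1$, which shows $\zeta_{p-1}^i - \zeta_{p-1}^j \in \op^\times$ for $i \ne j$, so the elements $\zeta_{p-1}^i \pi$ occupy distinct cosets modulo $\ip_{F(\pi)}^2$. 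The main obstacle is the residue computation $h(z) \equiv z^{p-1} - 1 \pmod{\ip_{F(\pi)}}$; this is not a deep difficulty but a careful valuation count, after which the rest is standard Hensel lifting.
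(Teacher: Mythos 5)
Your proof is correct, and it takes a genuinely different route from the paper's. The paper observes that the Weierstrass polynomial $P_0$ satisfies $P_0(x)\equiv x^p\pmod p$ and $v_p(P_0'(0))=1$, so $P_0$ is a Lubin--Tate endomorphism of a height-one formal $\zp$-module; the Galoisness and the $\zeta_{p-1}^i$-labeled roots $\pi^{(i)}$ then come for free from the formal-group structure (the $\zp^\times$-action $[\zeta_{p-1}^i]$ on the $P_0$-torsion). You instead scale the degree-$(p-1)$ Eisenstein polynomial $P_0(y)/y$ by the uniformizer $\pi$, check by a valuation count that the resulting monic polynomial over $\op_{F(\pi)}$ reduces to $z^{p-1}-1$ in the residue field, and apply Hensel's lemma; the unit discriminant of $z^{p-1}-1$ (via $\prod_{k=1}^{p-2}(1-\zeta_{p-1}^k)=p-1$) then separates the roots mod $\ip_{F(\pi)}^2$ and gives uniqueness. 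Both are short, but yours is self-contained and elementary, avoiding the appeal to \cite{LT1}, and it does not rely (even implicitly) on $F=\qp$ the way the phrase ``$P_0(x)\equiv x^p$ mod $p$'' in the paper's proof does. The paper's route, on the other hand, is suggestive of the formal-group theme that underlies the whole article.
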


\begin{proof} Using WPT as in Lemma \ref{lemma:totram}, write $f=P_0U_0$ and note that $v_p(P'_0(0))=1$ and $P_0(x)\equiv x^p$ mod $p$.  Therefore, $P_0$ is an endomorphism of a height-one formal $\zp$-module (see \cite{LT1}).  Since the roots of $f$ and $P_0$ coincide, the result follows.\end{proof}


\begin{cor}\label{cor:G1cyclic}
$\Gal(F(\pi)/F)$ is cyclic of order $p-1$.
\end{cor}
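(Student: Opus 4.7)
My plan is to exhibit an injective group homomorphism from $\Gal(F(\pi)/F)$ into the multiplicative group of a field, and then invoke the classical fact that any finite subgroup of such a group is cyclic.

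Concretely, by Lemma \ref{lemma:totram} the extension $F(\pi)/F$ is totally ramified of degree $p-1$ with $\pi$ a uniformizer, so the residue field of $F(\pi)$ is still $\kp$. For any $\sigma\in\Gal(F(\pi)/F)$, the image $\sigma(\pi)$ is again a uniformizer, so $\sigma(\pi)/\pi$ is a unit; I would define $\chi(\sigma)=\overline{\sigma(\pi)/\pi}\in\kp^\times$. The map $\chi$ is a homomorphism because of total ramification: if $u=\sigma(\pi)/\pi$ and $v=\tau(\pi)/\pi$, then $(\sigma\tau)(\pi)/\pi=u\,\sigma(v)$, and since $\sigma$ acts trivially on the residue field, $\overline{\sigma(v)}=\bar v$, giving $\chi(\sigma\tau)=\chi(\sigma)\chi(\tau)$.

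For injectivity I would use Lemma \ref{lemma:fandzeta}: $\sigma(\pi)$ must equal one of the $p-1$ distinct conjugates $\pi^{(i)}$, and from $\pi^{(i)}\equiv\zeta_{p-1}^i\pi\pmod{\ip^2}$ it follows that $\pi^{(i)}/\pi\equiv\zeta_{p-1}^i$ modulo the maximal ideal (the error term has positive $F$-valuation $2-1/(p-1)>0$). Since $\kp\supseteq\fp$ contains the $(p-1)$-st roots of unity and they remain distinct after reduction, $\chi$ takes $p-1$ distinct values and is therefore injective. Because the image is a finite subgroup of the multiplicative group of a field, it is cyclic, and so $\Gal(F(\pi)/F)$ is cyclic of order $p-1$.

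I do not anticipate any obstacle: the two preceding lemmas have already done the substantive work, and what remains is just the standard tame character argument packaged appropriately. The only subtlety worth flagging in the write-up is verifying that the reduction $\pi^{(i)}/\pi\equiv\zeta_{p-1}^i$ genuinely lives in $\kp^\times$, which uses that the extension is totally ramified so that the residue field does not grow.
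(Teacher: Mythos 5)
Your argument is correct, and it is a cleaner and more fully justified version of what the paper does in one line. The paper simply asserts that $\Gal(F(\pi)/F)$ is generated by $\pi\mapsto\pi^{(1)}$, leaving implicit why that automorphism has order $p-1$; verifying this amounts exactly to the multiplicativity you make explicit through the tame character $\chi(\sigma)=\overline{\sigma(\pi)/\pi}$. Your route injects $\Gal(F(\pi)/F)$ into $\kp^\times$ and then invokes the cyclicity of finite subgroups of a field's multiplicative group, so cyclicity follows structurally rather than by exhibiting a generator and checking its order. Both proofs pivot on Lemma~\ref{lemma:fandzeta}, which identifies the conjugates $\pi^{(i)}$ with $\zeta_{p-1}^i\pi$ modulo higher order; your approach has the advantage of making transparent both the injectivity (the $\zeta_{p-1}^i$ remain distinct in $\kp^\times$) and the homomorphism property (total ramification forces $\sigma$ to act trivially on residues), which the paper's terse statement takes for granted. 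One small point of care worth keeping in the write-up: the congruence ``mod $\ip^2$'' in Lemma~\ref{lemma:fandzeta} is best read as holding modulo $\pi^2$ in $\op[\pi]$ (compare its use in Lemma~\ref{lemma:basecase}), but under either reading the error term has positive valuation after dividing by $\pi$, so the reduction $\overline{\pi^{(i)}/\pi}=\overline{\zeta_{p-1}^i}$ that your injectivity step needs goes through.
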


\begin{proof}
The Galois group is generated by $\pi\mapsto\pi^{(1)}$.
\end{proof}

We next quote \cite[Lemma 1.2]{Sarkis:HeightOne} for reference.

\begin{lemma}\label{lemma:samenewton}
Suppose $g_1,g_2 \in\snc(\op)$ such that $0<v_F(g_1'(0))=v_F(g_2'(0))<\infty$, and every root of $g_1$ in $\ip^\mathrm{alg}$ is also a root of $g_2$ of at least the same multiplicity.  Suppose further that $g_1\notin \snc(\ip)$. Then $\Newt^-(g_1)=\Newt^-(g_2)$, and so the roots of $g_1$ and $g_2$ in $\ip^\mathrm{alg}$ coincide.
\end{lemma}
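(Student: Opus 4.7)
The plan is to pin down the sum of $v_F$-valuations of the nonzero roots in $\ip^{\mathrm{alg}}$ via the linear coefficient, and then use the root-containment hypothesis to squeeze the two such sums for $g_1$ and $g_2$ into equality. First, since $g_1 \notin \snc(\ip)$, the Weierstrass Preparation Theorem gives $g_1 = P_1 U_1$ with $P_1 \in \op[x]$ monic of degree $d_1 := \wideg(g_1)$ and $U_1 \in \op[\![x]\!]^\times$. Then $\Newt^-(g_1) = \Newt(P_1)$ is a polygon from $(1, v_F(g_1'(0)))$ to $(d_1, 0)$ whose total vertical descent is on the one hand $v_F(g_1'(0))$ (by the geometry of the endpoints) and on the other hand $S_1 := \sum v_F(\pi)$, summed over the nonzero roots $\pi$ of $g_1$ in $\ip^{\mathrm{alg}}$ counted with multiplicity (by the slope--root correspondence). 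Hence $S_1 = v_F(g_1'(0))$.

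Next I would establish a general upper bound: for any $g \in \snc(\op)$ with $g'(0)\ne 0$, writing $S(g)$ for the analogous sum of $v_F$-valuations of its nonzero roots in $\ip^{\mathrm{alg}}$, one has $S(g) \le v_F(g'(0))$, with strict inequality whenever $g \in \snc(\ip)$. Indeed, $S(g)$ is always the total vertical descent of $\Newt^-(g)$, which starts at $(1, v_F(g'(0)))$. If $\wideg(g) < \infty$ the polygon reaches the $x$-axis and equality holds; if instead $g \in \snc(\ip)$, then every nonzero coefficient has $v_F \ge 1$, every vertex of $\Newt(g)$ lies at height $\ge 1$, so the polygon cannot touch the $x$-axis, forcing $S(g) \le v_F(g'(0)) - 1 < v_F(g'(0))$.

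Combining these ingredients with the root-containment hypothesis, which gives $S(g_2) \ge S(g_1)$, yields
\[
v_F(g_2'(0)) \ge S(g_2) \ge S(g_1) = v_F(g_1'(0)) = v_F(g_2'(0)),
\]
so equality must hold throughout. The leftmost equality rules out $g_2 \in \snc(\ip)$, and the middle equality combined with root containment forces the multisets of nonzero roots of $g_1$ and $g_2$ in $\ip^{\mathrm{alg}}$ to coincide. Since each $\Newt^-(g_i)$ is determined by its starting point $(1, v_F(g_i'(0)))$ together with the multiset of slopes (equivalently, the multiset of nonzero-root valuations), we conclude $\Newt^-(g_1) = \Newt^-(g_2)$, and the coincidence of roots in $\ip^{\mathrm{alg}}$ follows. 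The subtle step is the strict inequality in the $g \in \snc(\ip)$ case: this is exactly what prevents the pathology of $g_2$ being a series with all coefficients in $\ip$ that nevertheless absorbs every root of $g_1$ with the right multiplicities.
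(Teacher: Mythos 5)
The paper merely quotes this lemma (it is Lemma~1.2 of Sarkis's earlier paper on height-one commuting dynamical systems over $\zp$) and does not reprove it, so there is no in-paper argument to compare against. On its own merits, your proof is correct: identifying the total vertical drop of $\Newt^-(g)$ simultaneously with $\sum v_F(\pi)$ over nonzero roots $\pi\in\ipalg$ and with $v_F(g'(0))$ minus the terminal height of $\Newt^-(g)$---and noting that the terminal height is $0$ precisely when $g\notin\snc(\ip)$---is the natural route, and your chain of inequalities collapses to equality exactly as needed, forcing both $g_2\notin\snc(\ip)$ and coincidence of the root multisets, hence of the polygons. Two points your write-up uses silently and which deserve a word: the sharpened bound $S(g)\le v_F(g'(0))-1$ in the $g\in\snc(\ip)$ case relies on $v_F$ being $\z$-valued on $\op$, so that the terminal vertex of $\Newt^-(g)$ sits at integer height $\geq 1$ rather than merely positive height; and the sum $S(g_2)$ is finite \emph{a priori}, even before one concludes $g_2\notin\snc(\ip)$, because $\Newt^-$ of a nonzero series in $\op[\![x]\!]$ always has only finitely many segments (its heights being bounded below by $\min_i v_F(a_i)$). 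Neither is a gap---just details worth recording.
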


\begin{lemma}\label{lemma:basecase}
Suppose $f,u\in\snc(\op)$ is a commuting pair such that $v_p(f'(0))=v_p(u'(0)-1)=1$ and $\wideg(f)=p$. Then $\Llf(1)=\Llu(0)$.
\end{lemma}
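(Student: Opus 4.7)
The plan is to prove the two inclusions $\Llf(1)\subseteq\Llu(0)$ and $\Llu(0)\subseteq\Llf(1)$ separately. For the forward direction, since $f$ and $u$ commute, $u$ sends $\Llf(1)$ to itself and fixes $0$; since the only zero of $u\in\gnc(\op)$ in $\ipalg$ is $0$, $u$ induces a permutation of $\Oof(1)=\Llf(1)\setminus\{0\}$, and the task reduces to showing this permutation is trivial. When $p=2$, Lemma~\ref{lemma:totram} gives $|\Oof(1)|=p-1=1$, so this is automatic. For $p>2$, I would fix $\pi\in\Oof(1)$ and, using Lemma~\ref{lemma:fandzeta}, write $u(\pi)=\pi^{(j)}$ for some $j$, so that $u(\pi)\equiv\zeta_{p-1}^j\pi\pmod{\ip^2}$. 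Working in $F(\pi)$ (where $\pi$ is a uniformizer and $v_{F(\pi)}(\ip)=p-1$), the expansion $u(\pi)=u'(0)\pi+\sum_{i\geq 2}a_i\pi^i$ together with $v_{F(\pi)}((u'(0)-1)\pi)=(p-1)e_{F/\qp}+1\geq 3$ yields $u(\pi)\equiv\pi\pmod{\pi^2}$, while the congruence from Lemma~\ref{lemma:fandzeta} holds modulo $\pi^{2(p-1)}$, a fortiori modulo $\pi^2$. Equating forces $v_{F(\pi)}(\zeta_{p-1}^j-1)\geq 1$, which is impossible for $j\neq 0$ since the identity $\prod_{i=1}^{p-2}(1-\zeta_{p-1}^i)=p-1\in\zp^\times$ makes every $\zeta_{p-1}^j-1$ with $j\neq 0$ a unit in $\op$. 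Hence $j=0$ and $u(\pi)=\pi$.

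For the reverse inclusion, I would apply Lemma~\ref{lemma:samenewton} to $g_1=f$ and $g_2=u(x)-x$. Both linear coefficients have $v_F$-valuation $e_{F/\qp}$, which is positive, equal, and finite; $\wideg(f)=p<\infty$ ensures $f\notin\snc(\ip)$; and the roots of $f$ in $\ipalg$ are simple by Lemma~\ref{lemma:totram}, each also a root of $u(x)-x$ by the forward inclusion already established, so the multiplicity hypothesis is trivially met. The lemma then gives $\Newt^-(f)=\Newt^-(u(x)-x)$, whence the roots of $f$ and $u(x)-x$ in $\ipalg$ coincide, i.e., $\Llf(1)=\Llu(0)$.

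The main obstacle is the forward inclusion when $p>2$: one must play the $\zeta_{p-1}$-symmetry of $\Oof(1)$ coming from Lemma~\ref{lemma:fandzeta} against the constraint that $u\equiv\id\pmod{p}$ can only shift $\pi$ by higher-order terms, and the exact precisions in $F(\pi)$ have to be tracked carefully so that the two descriptions of $u(\pi)$ agree only when $j=0$. Once that inclusion is secured, Lemma~\ref{lemma:samenewton} delivers the reverse inclusion essentially for free.
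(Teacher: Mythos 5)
Your proposal is correct and takes essentially the same approach as the paper: commutativity makes $u$ permute $\Oof(1)$, Lemma~\ref{lemma:fandzeta} together with $v_p(u'(0)-1)=1$ forces $u(\pi)=\pi^{(0)}=\pi$, and Lemma~\ref{lemma:samenewton} then upgrades the inclusion of root sets to an equality. The only differences are cosmetic: you split off the trivial $p=2$ case (unnecessary, since the same valuation estimate handles it), and you spell out that $\zeta_{p-1}^j-1$ is a unit for $j\neq 0$, which the paper leaves implicit in writing $u(\pi)=\pi+\pi^2 d$ and immediately concluding $u(\pi)=\pi^{(0)}$.
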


\begin{proof} If $\pi$ is a nonzero root of $f$, then $u(f(\pi))=f(u(\pi))=0$.  Thus $u(\pi)$ is another nonzero root of $f$, and by the hypothesis on $u'(0)$, it is in fact of the form $u(\pi)=\pi+\pi^2d$ for some $d\in\zp[\![\pi]\!]$.  By Lemma \ref{lemma:fandzeta}, $u(\pi)=\pi^{(0)}=\pi$. So every root of $f$ is also a root of $u(x)-x$.  The desired conclusion follows by Lemma \ref{lemma:samenewton}.\end{proof}

\begin{prop}\label{prop:countingroots}
Suppose $f,u\in\snc(\zp)$ is a minimal commuting pair. Let $\delta=1$ if $p>2$ and $\delta=2$ if $p=2$.  For all $n\geq \delta$, if $\pi\in\Oof(n+1)$, then $u^{\circ i}(\pi)=u^{\circ j}(\pi)$ if and only if $p^{n-\delta+1}\mid j-i$.
Moreover, $\Llf(n)=\Llu(n-\delta)$.
\end{prop}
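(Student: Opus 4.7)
The plan is induction on $n\geq\delta$, combining an orbit-size analysis on fibers with Lemma \ref{lemma:samenewton}. Granting the base case $\Llf(\delta)=\Llu(0)$ (discussed below), the inductive step proceeds as follows: for $\pi\in\Oof(n+1)$ we have $f(\pi)\in\Oof(n)\subseteq\Llu(n-\delta)$, so $u^{\circ p^{n-\delta}}(f(\pi))=f(\pi)$, and therefore $u^{\circ p^{n-\delta}}(\pi)$ lies in the fiber $f^{-1}(f(\pi))$, a $p$-element subset of $\Oof(n+1)$ by Lemma \ref{lemma:totram}. The series $u^{\circ p^{n-\delta}}$ permutes this fiber, so the orbit of $\pi$ under $u^{\circ p^{n-\delta}}$ has size $1$ or $p$; size $1$ would force $\pi\in\Llu(n-\delta)=\Llf(n)$, contradicting $\pi\in\Oof(n+1)$. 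Hence the orbit has size $p$ and $u^{\circ p^{n-\delta+1}}(\pi)=\pi$; any strictly smaller $u$-orbit would place $\pi$ in some $\Llu(k)\subseteq\Llu(n-\delta)=\Llf(n)$, another contradiction, so the $u$-orbit of $\pi$ has size exactly $p^{n-\delta+1}$, giving the relation $u^{\circ i}(\pi)=u^{\circ j}(\pi)\Leftrightarrow p^{n-\delta+1}\mid j-i$. In particular $\Oof(n+1)\subseteq\Llu(n+1-\delta)$, so $\Llf(n+1)\subseteq\Llu(n+1-\delta)$. To upgrade this inclusion to an equality, apply Lemma \ref{lemma:samenewton} to $g_1=f^{\circ(n+1)}$ and $g_2=u^{\circ p^{n+1-\delta}}(x)-x$: both have linear-coefficient valuation $n+1$ (using $v_p(u'(0)^{p^k}-1)=k+\delta$, a direct consequence of $u'(0)=1+p^\delta v$ with $v\in\zp^\times$), $g_1\notin\snc(\ip)$ since $\wideg(g_1)=p^{n+1}$ is finite, and every root of $g_1$ is a root of $g_2$.

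For the base case $\Llf(\delta)=\Llu(0)$, if $p>2$ then $\delta=1$ and this is exactly Lemma \ref{lemma:basecase}. The interesting case is $p=2$, $\delta=2$, where we must show that $u$ fixes the unique $\pi_0\in\Oof(1)$ and the two elements $\pi,\pi'\in\Oof(2)$. Write $u(x)=\sum_{i\geq 1}b_i x^i$ and $f(x)=\sum_{i\geq 1}a_i x^i$. Since $u$ permutes the singleton $\Oof(1)$, it fixes $\pi_0$; expanding $u(\pi_0)=\pi_0$ in $\z_2$ and reducing modulo $8$ (noting $\pi_0\in 2\z_2^\times$) forces $v_2(b_2)\geq 1$. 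Comparing the $x^3$ coefficients on the two sides of $u\circ f=f\circ u$ produces the identity $b_3 a_1(1-a_1^2)+2a_2 b_2(u'(0)-a_1)+a_3 u'(0)(u'(0)^2-1)=0$; a term-by-term valuation analysis (using $v_2(a_1)=1$, $v_2(a_2)=0$, $v_2(u'(0)-1)=2$ hence $v_2(u'(0)^2-1)=3$, and $v_2(b_2)\geq 1$) shows the only term with potential valuation $1$ is the first, which forces $v_2(b_3)\geq 1$ as well. A candidate Galois swap $u(\pi)=\pi'$ would give $v_{F(\pi)}(u(\pi)-\pi)=v_{F(\pi)}(\pi'-\pi)$; from $(\pi-\pi')^2=\alpha^2-4\beta$ for the Eisenstein polynomial $P_1(x)=x^2+\alpha x+\beta$ of $f(x)-\pi_0$, one computes $v_{F(\pi)}(\pi'-\pi)\in\{2,3\}$. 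However, $u(\pi)-\pi=(u'(0)-1)\pi+b_2\pi^2+b_3\pi^3+\cdots$ combined with $v_2(b_2),v_2(b_3)\geq 1$ gives $v_{F(\pi)}(u(\pi)-\pi)\geq 4$, a contradiction. So $u$ fixes both $\pi$ and $\pi'$, and Lemma \ref{lemma:samenewton} applied to $f^{\circ 2}$ and $u(x)-x$ (both with linear-coefficient valuation $2$) yields $\Llf(2)=\Llu(0)$.

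The main obstacle is this $p=2$ base case: the weaker hypothesis $v_2(u'(0)-1)\geq 1$ alone does not exclude the Galois swap of $\pi$ and $\pi'$, and the sharp minimality condition $v_2(u'(0)-1)=2$ must be leveraged through the commutation relation at low orders to push $v_{F(\pi)}(u(\pi)-\pi)$ strictly above $v_{F(\pi)}(\pi'-\pi)$.
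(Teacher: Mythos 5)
Your overall strategy mirrors the paper's: induct on $n$, show $\Llf(n+1)\subseteq\Llu(n+1-\delta)$, and upgrade the inclusion to equality via Lemma~\ref{lemma:samenewton}. The valuation bookkeeping ($v_p(u'(0)^{p^k}-1)=k+\delta$ and the linear coefficient of $f^{\circ(n+1)}$) is correct, as is the final application of Lemma~\ref{lemma:samenewton}. However, there is a gap in the inductive step for $p>2$: you assert that because $u^{\circ p^{n-\delta}}$ permutes the $p$-element fiber $f^{-1}(f(\pi))$, the orbit of $\pi$ has size $1$ or $p$. A single permutation of a $p$-element set can have a cycle of any length $1,\dots,p$, so this does not follow. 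To rule out an intermediate cycle length $c$ with $1<c<p$, you must use the fact that $u^{\circ cp^{n-\delta}}(\pi)=\pi$ with $p\nmid c$ forces $u^{\circ p^{n-\delta}}(\pi)=\pi$; this is precisely what the paper extracts from Lemma~\ref{lemma:samenewton} applied to $u^{\circ p^{s}}(x)-x$ and $u^{\circ rp^{s}}(x)-x$ (the two have the same linear-coefficient valuation since $p\nmid r$). Equivalently, one could note that the closure $\{u^{\circ a}\suchthat a\in p^{n-\delta}\zp\}$ acts on the fiber and orbits under a pro-$p$ group have $p$-power size, but as written you invoke a single permutation and the conclusion does not yet follow. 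Once this is patched, your treatment of the exact orbit size $p^{n-\delta+1}$ and the passage to $\Llf(n+1)=\Llu(n+1-\delta)$ is fine.

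Your $p=2$ base case is a genuinely different route. The paper instead shows $\Llf(3)=\Llu(1)$ by comparing the Newton polygon of $u^{\circ2}(x)-x$ (which yields $v_2(u^{\circ2}(\gamma_1)-\gamma_1)\ge2$) against the possible values of $v_2(\gamma_2-\gamma_1)$ coming from the quadratic Eisenstein polynomial over $\z_2[\beta]$, and only then deduces $\Llf(2)=\Llu(0)$ from the structure of $\Llu$. You instead prove $\Llf(2)=\Llu(0)$ directly, extracting $v_2(b_2),v_2(b_3)\geq1$ from the commutation relation $u\circ f=f\circ u$ at orders $2$ and $3$ together with the minimality hypothesis $v_2(u'(0)-1)=2$, then showing $v_{F(\pi)}(u(\pi)-\pi)\geq4$ exceeds $v_{F(\pi)}(\pi'-\pi)\in\{2,3\}$. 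This is more elementary (no Newton polygon of $u^{\circ2}(x)-x$ is needed), requires only one base case to launch the induction at $n=\delta$, and arguably makes the role of the sharp $p=2$ minimality condition more transparent. Both approaches are valid; yours trades the paper's Newton-polygon computation for a coefficient-by-coefficient identity. Fix the orbit-size gap and the argument is complete.
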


\begin{proof}  Suppose $p>2$. By Lemma \ref{lemma:basecase}, $\Llf(1)=\Llu(0)$. 
Proceeding by induction, assume that $\Llf(n)=\Llu(n-1)$ for some $n\geq 1$. 
Let $\pi\in\Oof(n+1)$, and consider the series $f^{\circ n}(x)-f^{\circ n}(\pi)$, which has $p^n$ roots in $\ipalg$. Since $f^{\circ n}(\pi)\in\Llf(1)$, for any $i\geq 0$ we have $u^{\circ i}(f^{\circ n}(\pi))=f^{\circ n}(\pi)=f^{\circ n}(u^{\circ i}(\pi))$, so $u^{\circ i}(\pi)\in\Oof(n+1)$.  
Suppose that for some $i$ and $j$ we have $u^{\circ i}(\pi)=u^{\circ j}(\pi)$, and so $u^{\circ(j-i)}(\pi)=\pi$.  If $i\neq j$, write $j-i=rp^s$ with $p\nmid r$.  Applying Lemma \ref{lemma:samenewton} to $u^{\circ p^s}(x)-x$ and $u^{\circ rp^s}(x)-x$, we get $u^{\circ p^s}(\pi)=\pi$. This is impossible if $s\leq n-1$, since $\pi\notin\Llf(n)=\Llu(n-1)\supseteq\Llu(s)$.  A similar argument shows that $u^{\circ p^n}(\pi)=\pi$. Therefore, $\Llf(n+1)\subseteq\Llu(n)$.  By Lemma \ref{lemma:samenewton},  $\Llf(n+1)=\Llu(n)$, concluding the proof for odd primes.

Suppose $p=2$. The inductive step of this proof will be similar to the $p>2$ case, but we must first show that $\Llf(2)=\Llu(0)$ and $\Llf(3)=\Llu(1)$. Suppose $\alpha$ is the nonzero root of $f$.  Lemma \ref{lemma:totram} implies $v_2(\alpha)=1=v_2(u(\alpha))$. Since $f(u(\alpha))=0$, then $\alpha\in\Llu(0)$. Let $\beta,\beta'$ be the two roots of $f(x)-\alpha$. Then $v_2(\beta)=v_2(\beta')=1/2$. Also,  $\{u(\beta),u(\beta')\}=\{\beta,\beta'\}$, so that $\beta,\beta'\in\Llu(1)$. (We will later show that u fixes $\beta$ and $\beta'$.)  Next, let $\gamma_1,\gamma_2$ be the two roots of $f(x)-\beta$.  Thus $u^{\circ 2}(\beta)=\beta=u^{\circ 2}(f(\gamma_1))=f(u^{\circ 2}(\gamma_1))$, and so $u^{\circ 2}(\gamma_1)=\gamma_1$ or $\gamma_2$.  We show the impossibility of the latter case by considering $v_2(\gamma_2-\gamma_1)$. 

Recall that by Lemma \ref{lemma:basecase}, $\alpha$ is the only element of $\Llf=\Llu$ of $\q_2$-valuation $1$, $\beta_1$ and $\beta_2$ are the only elements of $\q_2$-valuation $1/2$, and all other roots have $\q_2$-valuation $1/2^k$ for $k\geq 2$. Therefore, $\Newt^-(u^{\circ 2}(x)-x)$ must have a segment of length $1$ and slope $-1$, and one segment of length $2$ and slope $-2$, and all other segments of slope $-1/2^k$ for $k\geq 2$.  The dotted line in Figure \ref{figure:newtonu2} corresponds to the smallest possible slope for the third segment of $\Newt^-(u^{\circ 2}(x)-x)$.

\setlength{\unitlength}{0.1cm}
\begin{figure}[h]
\begin{picture}(150,40)
\linethickness{0.25mm}
\put(20,0){\vector(0,1){40}}
\put(20,0){\vector(1,0){90}}
\put(30,30){\circle*{1}}
\put(40,20){\circle*{1}}
\put(60,10){\circle*{1}}
\linethickness{0.1mm}
\put(30,30){\line(1,-1){10}}
\put(40,20){\line(2,-1){20}}

\multiput(19,30)(0,-10){3}{\line(1,0){2}}
\put(16,29){$3$}
\put(16,19){$2$}
\put(16,9){$1$}

\put(30,-1){\line(0,1){2}}
\put(29,-4){$1$}

\put(40,-1){\line(0,1){2}}
\put(39,-4){$2$}

\put(60,-1){\line(0,1){2}}
\put(59,-4){$4$}

\put(100,-1){\line(0,1){2}}
\put(99,-4){$8$}

\multiput(60,10)(6,-1.5){7}{\line(4,-1){4}}

\end{picture}
\caption{$\Newt^-(u^{\circ 2}(x)-x)$}\label{figure:newtonu2}
\end{figure}

\noindent So if $u^{\circ 2}(x)-x=\sum_i b_ix^i$, then $v_2(b_1)=3$, $v_2(b_2)=2$, $v_2(b_3)\geq 2$, $v_2(b_4)=1$, and $v_2(b_i)\geq 1$ for $5\leq i\leq 7$. Thus $v_2(u^{\circ 2}(\gamma_1)-\gamma_1)\geq \min_i\{v_2(b_i\gamma_1^i)\}=\min_i\{v_2(b_i)+i/4\}\geq 2$.

But $\gamma_1$ and $\gamma_2$ are roots of an Eisenstein monic quadratic polynomial over $\zp[\beta]$, and so $(\gamma_2-\gamma_1)^2=b^2-4c$, where $v_2(b) \geq v_2(c)=1/2$.  Since $v_2(b^2)$ is an integer while $v_2(4c)=5/2$, the isosceles triangle principle guarantees that 
$$v_2(\gamma_2-\gamma_1)=\begin{cases} 1/2 & \mathrm{if}\; v_2(b)=1/2\\ 1 & \mathrm{if}\; v_2(b)=1\\ 5/4& \mathrm{if} \; v_2(b)\geq 3/2\end{cases}$$  
Therefore, $u^{\circ 2}(\gamma_1)$ cannot equal $\gamma_2$.  A similar proof shows that the roots of $f(x)-\beta'$ are fixed points of $u^{\circ 2}(x)$, so that $\Llf(3)\subseteq\Llu(1)$.  Lemma \ref{lemma:samenewton} implies $\Llf(3)=\Llu(1)$.  Finally, noting that $\Llu(0)\subset\Llu(1)$ and that $u$ must fix two points in addition to $0$ and $\alpha$, we conclude that $\beta,\beta'\in\Llu(0)$.

The inductive step for $p=2$ follows a proof similar to that of $p>2$.
Assume that $\Llf(n)=\Llu(n-2)$ for some $n\geq 3$. Let $\pi\in\Oof(n+1)$, and consider the series $f^{\circ n}(x)-f^{\circ n}(\pi)$.
Since $f^{\circ n}(\pi)\in\Llf(1)$, for any $i\geq 0$ we have $u^{\circ i}(f^{\circ n}(\pi))=f^{\circ n}(\pi)=f^{\circ n}(u^{\circ i}(\pi))$, so $u^{\circ i}(\pi)\in\Oof(n+1)$.  
Suppose that for some $i$ and $j$ we have $u^{\circ i}(\pi)=u^{\circ j}(\pi)$, and so $u^{\circ(j-i)}(\pi)=\pi$.  If $i\neq j$, write $j-i=r2^s$ with $2\nmid r$.  Applying Lemma \ref{lemma:samenewton} to $u^{\circ 2^s}(x)-x$ and $u^{\circ r2^s}(x)-x$, we get $u^{\circ 2^s}(\pi)=\pi$. This is impossible if $s\leq n-2$, since $\pi\notin\Llf(n)=\Llu(n-2)\supseteq\Llu(s)$.  A similar argument shows that $u^{\circ 2^{n-1}}(\pi)=\pi$. Therefore, $\Llf(n+1)\subseteq\Llu(n-1)$.  By Lemma \ref{lemma:samenewton},  $\Llf(n+1)=\Llu(n-1)$. \end{proof}

\begin{cor}
Suppose $f,u\in\snc(\zp)$ is a minimal commuting pair. Then 
$e(\bar u)=e=\begin{cases}p-1&\mathrm{if}\;p>2\\2&\mathrm{if}\;p=2\end{cases}$. In particular, $\Normsep_\fp(\cala_{\bar u})$ is an extension of a finite group of order dividing $e$ by $\cala_{\bar u}$.
\end{cor}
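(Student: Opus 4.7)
The plan is to compute the lower ramification numbers $i_n(\bar u)$ explicitly for every $n \geq 0$, evaluate the limit defining $e(\bar u)$, and then invoke Theorem \ref{thm:LMS}. The main step is to show $\wideg(u^{\circ p^n}(x) - x) = p^{n+\delta}$. By Proposition \ref{prop:countingroots}, the fixed-point set $\Llu(n)$ equals $\Llf(n+\delta)$, so $u^{\circ p^n}(x) - x$ has exactly $p^{n+\delta}$ roots in $\ipalg$ as a set. The linear coefficient $u'(0)^{p^n} - 1$ has $p$-adic valuation exactly $n + \delta$ by a lifting-the-exponent calculation starting from $v_p(u'(0) - 1) = \delta$; this matches $v_p(f'(0)^{n+\delta}) = n + \delta$. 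I would then apply Lemma \ref{lemma:samenewton} with $g_1 = f^{\circ(n+\delta)}$ and $g_2 = u^{\circ p^n}(x) - x$: the hypotheses hold because the roots of $g_1$ are simple (Lemma \ref{lemma:totram}) and each lies in the root set of $g_2$ with multiplicity at least $1$, while $g_1 \notin \snc(\ip)$ since $\wideg(g_1) = p^{n+\delta}$ is finite. The conclusion $\Newt^-(g_1) = \Newt^-(g_2)$, combined with the fact that the two series share the same nonzero root set, forces the multiplicity of every nonzero root of $g_2$ to equal that of $g_1$, namely $1$: on each slope, the horizontal length equals both the number of roots of that valuation and the total multiplicity, so each root must contribute exactly once. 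Together with a simple root at the origin (since the linear coefficient does not vanish), this yields $\wideg(u^{\circ p^n}(x) - x) = p^{n+\delta}$, and hence $i_n(\bar u) = p^{n+\delta} - 1$.

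The limit defining $e(\bar u)$ then evaluates to
$$e(\bar u) = \lim_{n \to \infty} \frac{(p-1)(p^{n+\delta} - 1)}{p^{n+1}} = (p-1)\, p^{\delta - 1},$$
which equals $p - 1$ when $p > 2$ and $2$ when $p = 2$, i.e., $e(\bar u) = e$. Invoking Theorem \ref{thm:LMS} with $\omega = \bar u$ then delivers the claim about $\Normsep_\fp(\cala_{\bar u})$.

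The main obstacle I anticipate is establishing that every root of $u^{\circ p^n}(x) - x$ in $\ipalg$ is in fact simple. A direct attack through the product formula $(u^{\circ p^n})'(\pi) = \prod_{i=0}^{p^n - 1} u'(u^{\circ i}(\pi))$ would demand orbit-by-orbit control ruling out accidental equality to $1$, which is delicate because $u'(0) \equiv 1 \pmod{p^\delta}$. The strategy above sidesteps this by transferring the simplicity of the roots of $f^{\circ(n+\delta)}$, which is immediate from Lemma \ref{lemma:totram}, to $u^{\circ p^n}(x) - x$ through the shared Newton polygon provided by Lemma \ref{lemma:samenewton}.
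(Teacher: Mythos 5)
Your proposal is correct and is essentially the intended (omitted) argument: the paper states this corollary immediately after Proposition~\ref{prop:countingroots} with no separate proof, treating it as a direct consequence of the root-set identification $\Llu(n)=\Llf(n+\delta)$ together with Theorem~\ref{thm:LMS}. Your computation of $v_p(u'(0)^{p^n}-1)=n+\delta$ via lifting-the-exponent (using $v_p(u'(0)-1)=\delta$, where for $p=2$ the hypothesis $\delta=2$ is exactly what makes LTE clean), the application of Lemma~\ref{lemma:samenewton} with $g_1=f^{\circ(n+\delta)}$ and $g_2=u^{\circ p^n}(x)-x$, and the concluding limit calculation all check out.

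One minor simplification: once Lemma~\ref{lemma:samenewton} gives $\Newt^-(g_1)=\Newt^-(g_2)$, you can read off $\wideg(g_2)=p^{n+\delta}$ directly, since $\Newt^-(g)$ always terminates at the point $(\wideg(g),0)$ (this is the point where the polygon turns to nonnegative slope, by the Weierstrass Preparation Theorem). This bypasses the slope-by-slope multiplicity bookkeeping you carry out, though that argument is also sound and in fact proves the stronger statement that every fixed point of $u^{\circ p^n}$ in $\ipalg$ is simple.
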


\section{Abelian extensions and torsion series} 
Consider the following notation: 
$$\begin{array}{rrrrrrl}
K_n&=&\qp(\Llf(n))&\quad&K&=&\cup_{n\geq 1}K_n\\
\G_n&=&\Gal(K_n/\qp)&\quad&\G&=&\displaystyle{\lim_{\leftarrow}\G_n}=\Gal(K/\qp)\end{array}$$  
For the remainder of the note, we will assume that for each $n$, $K_n$ is generated by any single element of $\Oof(n)$, or equivalently, $\qp(\pi)/\qp$ is Galois for all $\pi\in\Llf$.  In this section, we show that $\G$ is abelian, and that $\bar u$ commutes with a torsion series of order $e$.

Call a sequence $\{\pi_n\}_{n\geq 0}$ of elements in $\Llf$ {\sl $f$-consistent} if $\pi_0=0$, $\pi_1\neq 0$, and $f(\pi_{n+1})=\pi_n$ for all $n\geq 0$ (see \cite[Page 329]{Lubin:Nads}); in particular, $\pi_n\in\Oof(n)$ for all $n>0$.  By Lemma \ref{lemma:totram}, for $n>0$, $K_n/\qp$ is totally ramified of degree $(p-1)p^{n-1}$ and $|G_n|=(p-1)p^{n-1}$.

Fix an $f$-consistent sequence $\{\pi_n\}_{n\geq 0}$ and let $\ug_n\in \G_n$ be defined by $\ug_n(\pi_n)=u(\pi_n)$.  Since the coefficients of $u$ are fixed by $\ug_n$, we have $\ug_n^i(\pi)=u^{\circ i}(\pi)$ for all $i$. Clearly, $\ug_1$ is trivial, as is $\G_1$ if $p=2$.

\begin{lemma}\label{lemma:cyclicsubgroups} Suppose $f,u\in\snc(\zp)$ is a minimal commuting pair, and $\qp(\pi)/\qp$ is Galois for all $\pi\in\Llf$.
\begin{enumerate} 
\item If $p>2$, then the Sylow $p$-subgroup of $\G_n$ is cyclic and generated by $\ug_n$ for all $n\geq 1$.
\item If $p=2$, then $\G_n$ contains a cyclic subgroup of order $2^{n-2}$ generated by $\ug_n$ for all $n\geq 3$.
\end{enumerate}
In both cases,  $\ug_{n+1}|_{\G_n}=\ug_n$, and so $\ug=\displaystyle{\lim_{\leftarrow}\ug_n}$ generates a procyclic subgroup of $\G$ of index $2$. 
\end{lemma}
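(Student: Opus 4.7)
The plan is to use Proposition~\ref{prop:countingroots} to pin down the order of $\ug_n$, and then locate $\langle \ug_n \rangle$ inside $\G_n$ via Sylow theory (for $p > 2$) or direct index counting (for $p = 2$). First I would verify that $\ug_n$ is well-defined: the Galoisness hypothesis gives $K_n = \qp(\pi_n)$, so any element of $\G_n$ is determined by its action on $\pi_n$; and by Lemma~\ref{lemma:totram} the $\G_n$-orbit of $\pi_n$ has size $[\qp(\pi_n):\qp] = (p-1)p^{n-1} = |\Oof(n)|$, so the orbit fills $\Oof(n)$. Since $u$ commutes with $f$ and fixes $0$, it permutes $\Oof(n)$, so $u(\pi_n)\in\Oof(n)$ and there is a unique $\ug_n\in\G_n$ with $\ug_n(\pi_n)=u(\pi_n)$. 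Because $\ug_n$ fixes the $\qp$-coefficients of $u$, iteration yields $\ug_n^i(\pi_n)=u^{\circ i}(\pi_n)$ for all $i$, so the order of $\ug_n$ equals the length of the $u$-orbit of $\pi_n$.

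Next, by Proposition~\ref{prop:countingroots} (with Lemma~\ref{lemma:basecase} handling the $n=1$ base case), that orbit has length $p^{n-1}$ when $p>2$ and $n\geq 1$, and length $2^{n-2}$ when $p=2$ and $n\geq 3$. When $p>2$, this is exactly the Sylow $p$-order in $\G_n$; moreover, the number of Sylow $p$-subgroups of $\G_n$ must be $\equiv 1\pmod p$ and divide $p-1$, hence equals $1$, identifying $\langle\ug_n\rangle$ as the unique (and cyclic) Sylow $p$-subgroup. When $p=2$, $\langle\ug_n\rangle$ is visibly cyclic of order $2^{n-2}$, and so has index $2$ in $\G_n$.

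Finally, for the restriction $\ug_{n+1}|_{K_n}=\ug_n$, applying $u\circ f=f\circ u$ to $f(\pi_{n+1})=\pi_n$ gives
\[
\ug_{n+1}(\pi_n) = f(\ug_{n+1}(\pi_{n+1})) = f(u(\pi_{n+1})) = u(\pi_n) = \ug_n(\pi_n),
\]
and $K_n=\qp(\pi_n)$ forces agreement on all of $K_n$. Passing to the inverse limit then produces a well-defined $\ug\in\G$ generating a procyclic subgroup, whose index in $\G$ is the stable value of $[\G_n:\langle\ug_n\rangle]$. I do not expect any real obstacle: the argument is essentially bookkeeping around Proposition~\ref{prop:countingroots}, the only mildly delicate points being the small-$n$ edge cases ($\ug_1$ is trivial when $p>2$ by Lemma~\ref{lemma:basecase}, and the statement is vacuous for $n\leq 2$ when $p=2$).
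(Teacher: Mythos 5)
Your proposal is correct and follows essentially the same route as the paper: determine the order of $\ug_n$ from Proposition~\ref{prop:countingroots}, and verify $\ug_{n+1}|_{K_n}=\ug_n$ by the commutation computation. One thing you do better than the terse printed proof is justify uniqueness of the Sylow $p$-subgroup when $p>2$ (the count of Sylow $p$-subgroups divides $p-1$ and is $\equiv 1 \bmod p$, hence is $1$); the paper needs this normality later but never spells it out. A small caution: the lemma as stated says $\langle\ug\rangle$ has index $2$ in $\G$, but for $p>2$ the index is $p-1$; your phrasing (``the stable value of $[\G_n:\langle\ug_n\rangle]$'') correctly sidesteps what appears to be a typo for ``index $e$.''
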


\begin{proof} The result follows immediately form Proposition \ref{prop:countingroots}.
If $p>2$, $\ug_n^i=1$ if and only if $p^{n-1}\mid i$. If $p=2$ and $n\geq 3$, $\ug_n^i=1$ if and only if $p^{n-2}\mid i$.  In both cases, $\ug_{n+1}(\pi_n)=\ug_{n+1}(f(\pi_{n+1}))=f(\ug_{n+1}(\pi_{n+1}))=f\circ u(\pi_{n+1})=u\circ f(\pi_{n+1})=\ug_n(\pi_n)$.
\end{proof}



\begin{lemma}\label{lemma:leftright}
Suppose $\anyg,\anyh\in\G$ and $g,h\in\snc(\zp)$ such that for some $\pi\in K$, $\anyg(\pi)=g(\pi)$ and $\anyh(\pi)=h(\pi)$. Then $\anyg\anyh(\pi)=h\circ g(\pi)$.
\end{lemma}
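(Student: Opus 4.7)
The plan is to peel off the composition one layer at a time and exploit the fact that the coefficients of $h$ lie in $\zp$ and so are fixed by $\anyg$. With the convention $\anyg\anyh(\pi) = \anyg(\anyh(\pi))$ used throughout the paper (verified, for instance, by the identity $\ug_n^i(\pi) = u^{\circ i}(\pi)$ noted just before Lemma \ref{lemma:cyclicsubgroups}), I would substitute the hypothesis $\anyh(\pi) = h(\pi)$ to rewrite
\[
\anyg\anyh(\pi) = \anyg(h(\pi)).
\]

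The main technical step is then to move $\anyg$ past $h$. Fix $n$ with $\pi \in K_n$; since $\pi$ lies in the maximal ideal of the ring of integers of $K_n$ and $h \in \snc(\zp)$ has no constant term, the series $h(\pi) = \sum_{i \geq 1} a_i \pi^i$ converges in the complete local field $K_n$. Because $K_n/\qp$ is Galois, $\anyg$ restricts to an element of $\G_n$, hence to a continuous $\qp$-algebra automorphism of $K_n$ fixing each $a_i \in \zp$. Continuity lets me interchange $\anyg$ with the convergent sum, giving
\[
\anyg(h(\pi)) = \sum_{i\geq 1} a_i\, \anyg(\pi)^i = h(\anyg(\pi)) = h(g(\pi)) = h \circ g(\pi),
\]
which is the claim.

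There is no substantive obstacle: the content of the lemma is just the order reversal. Because $\anyg$ acts through the coefficients of $h$ rather than in place of $h$, applying $\anyh$ first and $\anyg$ second yields $h \circ g$ rather than $g \circ h$. This asymmetry will be the mechanism by which the abelianness of $\G$, once established, translates into commutation relations among the associated power series in $\snc(\zp)$.
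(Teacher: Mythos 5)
Your proof is correct and follows the same approach as the paper: substitute $\anyh(\pi)=h(\pi)$, then pass $\anyg$ through the coefficients of $h$ (which lie in $\zp$ and so are fixed) to get $\anyg(h(\pi))=h(\anyg(\pi))=h\circ g(\pi)$. The paper states this as a one-line computation; you simply spell out the continuity point justifying the interchange.
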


\begin{proof}
A direct computation yields the result: $\anyg\anyh(\pi)=\anyg(h(\pi))=h(\anyg(\pi))=h\circ g(\pi)$.
\end{proof}

\begin{lemma}\label{lemma:agreement}
Suppose $h_1,h_2\in\zp[\![x]\!]$ and $k$ is an integer such that $h_1(\pi_n)\equiv h_2(\pi_n)$ mod $\pi_n^k$ for infinitely many $n$.  Then $\bar h_1(x)\equiv \bar h_2(x)$ mod $x^k$.
\end{lemma}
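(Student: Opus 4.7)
The plan is to argue by contradiction. Suppose that $\bar h_1 \not\equiv \bar h_2 \pmod{x^k}$, and let $h = h_1 - h_2 = \sum_{i \geq 0} c_i x^i \in \zp[\![x]\!]$. Then there is some index less than $k$ at which the coefficient of $\bar h$ is nonzero; let $j_0$ be the smallest such index, so $j_0 < k$, $v_p(c_{j_0}) = 0$, and $v_p(c_i) \geq 1$ for all $i < j_0$.

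The main input is Lemma \ref{lemma:totram}: for $n \geq 1$, $\pi_n$ is a uniformizer of the totally ramified extension $K_n/\qp$ of degree $e_n = (p-1)p^{n-1}$, so $v_p(\pi_n) = 1/e_n$. I would use this to compute $v_p(h(\pi_n))$ term by term. For $i < j_0$ one gets $v_p(c_i \pi_n^i) \geq 1 + i/e_n$; for $i = j_0$ one gets $v_p(c_{j_0}\pi_n^{j_0}) = j_0/e_n$; and for $i > j_0$ one gets $v_p(c_i\pi_n^i) \geq i/e_n > j_0/e_n$. Since $j_0$ is fixed and $e_n \to \infty$, for all sufficiently large $n$ we have $j_0/e_n < 1$, so the term of index $j_0$ has strictly smallest valuation, and the isosceles triangle principle gives $v_p(h(\pi_n)) = j_0/e_n$.

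On the other hand, the hypothesis $h_1(\pi_n) \equiv h_2(\pi_n) \pmod{\pi_n^k}$ for infinitely many $n$ translates into $v_p(h(\pi_n)) \geq k \cdot v_p(\pi_n) = k/e_n$ for infinitely many $n$. Combining with the previous paragraph, for infinitely many sufficiently large $n$ we would need $j_0/e_n \geq k/e_n$, i.e., $j_0 \geq k$, contradicting $j_0 < k$. Hence $\bar h_1 \equiv \bar h_2 \pmod{x^k}$.

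No step here should be genuinely hard; the only thing to be careful about is ensuring that the low-index terms $i < j_0$ really do have strictly larger valuation than the $j_0$ term, which is where the fact that $e_n \to \infty$ (so $j_0/e_n$ eventually becomes less than $1$) is essential. This is exactly why the lemma needs the congruence to hold for infinitely many $n$ rather than just one.
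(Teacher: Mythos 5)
Your proof is correct and takes essentially the same approach as the paper's: identify the smallest index $m$ (your $j_0$) where the reduced coefficients of $h_1-h_2$ differ, use that $e_n\to\infty$ so the term at that index eventually has strictly smallest valuation, and compare the resulting exact $\pi_n$-valuation $m$ of $h_1(\pi_n)-h_2(\pi_n)$ with the hypothesized lower bound $k$. The paper phrases this directly (concluding $m\geq k$) rather than by contradiction, but the content is identical.
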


\begin{proof}
Suppose $\bar h_1(x)\equiv \bar h_2(x)+\bar d x^m$ mod $x^{m+1}$ for some $d\in\zp^\times$ and $m>0$.  Pick $n$ large enough so that $v_p(\pi_n^{m+1})\leq 1$ and $h_1(\pi_n)\equiv h_2(\pi_n)$ mod $\pi_n^k$.  Then $h_1(\pi_n)\equiv h_2(\pi_n)+d\pi^m$ mod $\pi_n^{m+1}$, which implies $m\geq k$.
\end{proof}

\begin{remark}\label{remark:realization}{\rm 
Let $\pi\in\Oof(n)$. The Galoisness of $\qp(\pi)/\qp$ is equivalent to the following: for each $\pi'\in\Oof(n)$ there exists a $g_{\pi'}\in\gnc(\zp)$ such that $g_{\pi'}(\pi)=\pi'$.
As such, all the Galois automorphisms are ``locally analytic.''  On the other hand, the relation between $\ug$, $\ug_n$, and $u$ suggests that at least some of the Galois automorphisms are ``globally analytic''; in fact, our main result aims to show that they all are.  We take a step in that direction by partially extending the relation between $\ug$, $\ug_n$, and $u$ to other elements of the Galois group. For $\anyg\in \G$, write $\anyg(\pi_n)=\sum_{i=1}^\infty c_{i,n}\pi_n^i$, where the coefficients $c_{i,n}$ are Teichm\"uller representatives, and let $g_n(x)=\sum_{i=1}^\infty c_{i,n}x^i$.  Note that $\anyg^i(\pi_n)=g_n^{\circ i}(\pi_n)$ for all $i$. We will call the sequence $\{g_n\}$ the {\sl realization} of $\anyg$. Let $\Gamma=\{\sum_{i=1}^\infty c_ix^i\in\snc(\zp)\suchthat c_i^p=c_i\}$. The topology of $\zp[\![x]\!]$ induced by the additive $\z$-valued valuation $v_x$ is equivalent to the product topology of $\zp^\n$ when each copy of $\zp$ has the discrete topology. Tychonoff's theorem thus implies that $\Gamma$ is a compact subset of $\zp[\![x]\!]$. So $\{g_n\}$ must have an accumulation point $g\in\Gamma$.}
\end{remark}

\begin{lemma}\label{lemma:normalcylifts}
With the above notation, if $\{g_n\}$ is a realization of $\anyg\in\G$ with an accumulation point $g$, then $\bar g\in\Norm_\fp(\bar u)$.
\end{lemma}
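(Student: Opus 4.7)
The plan is to deduce a conjugation relation on $\ug$ inside $\G$, translate it into a power-series identity via Lemma \ref{lemma:leftright}, and then pass to the accumulation point $g$ with the help of Lemma \ref{lemma:agreement}. First observe that $\langle \ug \rangle$ is a normal closed subgroup of $\G$: for each $n$, $\langle \ug_n \rangle$ is normal in $\G_n$ because when $p > 2$ it coincides with the (unique) wild inertia subgroup of the totally ramified extension $K_n/\qp$, while when $p = 2$ it has index two in $\G_n$. Consequently, for any $\anyg \in \G$ there exists $a = a(\anyg) \in \zp$ with $\anyg \, \ug \, \anyg^{-1} = \ug^{a}$.

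Applying Lemma \ref{lemma:leftright} twice, with $\anyg(\pi_n) = g_n(\pi_n)$, $\anyg^{-1}(\pi_n) = g_n^{\circ -1}(\pi_n)$, and $\ug(\pi_n) = u(\pi_n)$ (here $g_n^{\circ -1} \in \zp[\![x]\!]$ because the linear coefficient of $g_n$ is the Teichm\"uller lift of a unit), the conjugation relation translates, at each level $n$, into
\[
g_n^{\circ -1} \circ u \circ g_n(\pi_n) = u^{\circ a}(\pi_n),
\]
where $u^{\circ a} \in \zp[\![x]\!]$ is Lubin's iterate of $u$, so that $\ug^{a}(\pi_n) = u^{\circ a}(\pi_n)$. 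Equivalently, $u \circ g_n(\pi_n) = g_n \circ u^{\circ a}(\pi_n)$ for all $n$.

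Extract a subsequence $\{n_j\}$ with $g_{n_j} \equiv g \pmod{x^j}$, which exists because $g$ is an accumulation point of $\{g_n\}$. Since $u$, $g$, and the $g_{n_j}$ all lie in $\snc(\zp)$, and since $u^{\circ a}(\pi_{n_j})$ has the same $\qp$-valuation as $\pi_{n_j}$, substitution of an element of $\ipalg$ into any of these series does not decrease its valuation; hence replacing $g_{n_j}$ by $g$ on either side of the displayed identity introduces an error of valuation at least $j \cdot v_p(\pi_{n_j})$. This yields
\[
u \circ g(\pi_{n_j}) \equiv g \circ u^{\circ a}(\pi_{n_j}) \pmod{\pi_{n_j}^{j}}
\]
for every $j$. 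For each fixed $k$, this congruence holds modulo $\pi_n^k$ for infinitely many $n$ (namely the $n_j$ with $j \geq k$), so Lemma \ref{lemma:agreement} gives $\overline{u \circ g} \equiv \overline{g \circ u^{\circ a}} \pmod{x^k}$; letting $k \to \infty$ forces $\bar u \circ \bar g = \bar g \circ \bar u^{\circ a}$. Rearranging, $\bar g^{\circ -1} \circ \bar u \circ \bar g = \bar u^{\circ a} \in \cala_{\bar u}$, which proves $\bar g \in \Norm_\fp(\bar u)$.

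The main obstacle is the double-scale bookkeeping in the third paragraph: one must juggle an $x$-adic approximation ($g_{n_j} \to g$ coefficient-wise) against an identity that holds only at the values $\pi_{n_j}$, then convert the resulting $\pi_{n_j}$-adic estimate into a congruence of formal power series via Lemma \ref{lemma:agreement}. Compatibility between the two scales rests on the integrality of the coefficients of $u$, $g$, and $u^{\circ a}$, together with the fact that $v_p(\pi_n) \to 0$ as $n \to \infty$.
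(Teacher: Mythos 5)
Your proof is correct and follows essentially the same route as the paper: establish the conjugation relation $\anyg\,\ug\,\anyg^{-1}=\ug^{a}$ via normality of $\langle\ug\rangle$ (which the paper takes for granted, invoking Lemma~\ref{lemma:cyclicsubgroups} only implicitly), translate it into a series identity at $\pi_n$ via Lemma~\ref{lemma:leftright}, pass to a convergent subsequence, and conclude by Lemma~\ref{lemma:agreement}. The only cosmetic difference is that you write the conjugation as $\anyg\ug\anyg^{-1}=\ug^a$ whereas the paper writes $\ug\anyg=\anyg\ug^t$ (so $a$ and $t$ are reciprocals in $\zp^\times$), and you are somewhat more explicit about the valuation bookkeeping when replacing $g_{n_j}$ by $g$.
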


\begin{proof}
By Lemma \ref{lemma:leftright}, $\ug\anyg(\pi_n)=g_n\circ u(\pi_n)$ and $\anyg\ug^t(\pi_n)= u^{\circ t}\circ g_n(\pi_n)$ for all $n$.
Let $\{g_{n_\ell}\}$ be a subsequence of $\{g_n\}$ which converges to $g$.  Given $k>0$, pick $l$ large enough such that if $\ell\geq l$ then $g(x)\equiv g_{n_\ell}(x)$ mod $x^k$.  Thus, if $\ell\geq l$, we have the following congruences mod $\pi_{n_\ell}^k$.
\begin{eqnarray*}
g\circ u(\pi_n)&\equiv&\ug\anyg(\pi_{n_\ell})\\
&=&\anyg\ug^t(\pi_{n_\ell})\\
&\equiv&u^{\circ t}\circ g_n(\pi_{n_\ell})
\end{eqnarray*}
Lemma \ref{lemma:agreement} implies $\bar u\circ \bar g\equiv\bar g\circ \bar u^{\circ t}$ mod $x^k$.  Since $k$ was arbitrary, our result follows.
\end{proof}

\begin{remark}{\rm Suppose $\anyg$ and $g$ are as above.  Then $\bar{g}=X_{K_1}(\anyg)$ only if the $f$-consistent sequence $\{\pi_n\}_{n\geq 1}$ is norm-consistent too---that is, if $N_{K_m/K_n}(\pi_m)=\pi_n$ whenever $m\geq n$.}
\end{remark}

\subsection{Proof that $\G$ is abelian if $p>2$}

 

\begin{lemma} Suppose $p>2$, $f,u\in\snc(\zp)$ is a minimal commuting pair, and $\qp(\pi)/\qp$ is Galois for all $\pi\in\Llf$. Then for all $n\geq 1$ there exists an automorphism $\zg_n\in \G_n$ of order $p-1$, and $\zg_{n+1}|_{\G_n}=\zg_n$.
\end{lemma}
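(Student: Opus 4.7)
The strategy is to build the $\zg_n$ inductively: start with $\zg_1$ a generator of the cyclic group $\G_1$ (order $p-1$ by Corollary \ref{cor:G1cyclic}), and at each step lift $\zg_n$ to $\G_{n+1}$ and then correct the lift by a suitable power to simultaneously force the order back down to $p-1$ and preserve the restriction to $K_n$.

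The essential preliminary observation is that
$$\langle \ug_n\rangle \;=\; \Gal(K_n/K_1).$$
Indeed, $\Gal(K_n/K_1)$ is normal in $\G_n$ of order $[K_n:K_1]=p^{n-1}$, making it a normal (hence unique) Sylow $p$-subgroup; every element of $p$-power order thus lies in it, in particular $\ug_n$, and Lemma \ref{lemma:cyclicsubgroups}(1) supplies the matching orders. Consequently the restriction $\G_n \twoheadrightarrow \G_1$ is surjective with kernel $\langle\ug_n\rangle$, yielding a clean short exact sequence
$$1 \longrightarrow \langle\ug_n\rangle \longrightarrow \G_n \longrightarrow \G_1 \longrightarrow 1$$
with a pro-$p$ kernel and a quotient of order $p-1$.

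For the inductive step, assume $\zg_n \in \G_n$ of order $p-1$ has been constructed. Pick any lift $\tilde\zg \in \G_{n+1}$ of $\zg_n$ under the surjective restriction $\G_{n+1}\twoheadrightarrow \G_n$. Then $\tilde\zg^{p-1}$ restricts to the identity on $K_n$, so it lies in $\Gal(K_{n+1}/K_n)$, which has order $p$; thus $|\tilde\zg|=(p-1)p^m$ with $m\in\{0,1\}$. Since $\gcd(p^m,p-1)=1$, choose $q\in\z$ with $qp^m\equiv 1\pmod{p-1}$, and set
$$\zg_{n+1} \;:=\; \tilde\zg^{\,qp^m}.$$
A routine check then gives $\zg_{n+1}^{p-1}=\tilde\zg^{\,qp^m(p-1)}=1$ (the exponent is a multiple of $|\tilde\zg|$), while $\zg_{n+1}|_{K_n}=\zg_n^{\,qp^m}=\zg_n$. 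Hence $|\zg_{n+1}|$ divides $p-1$ and is divisible by $|\zg_n|=p-1$, so equals $p-1$ exactly, and the compatibility condition $\zg_{n+1}|_{\G_n}=\zg_n$ holds.

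The only non-formal step is identifying $\langle\ug_n\rangle$ with $\Gal(K_n/K_1)$, which isolates the pro-$p$ part cleanly so that the induction reduces to the coprime-order arithmetic between $p$ and $p-1$; once that identification is in hand, the construction is essentially Schur--Zassenhaus executed one layer at a time.
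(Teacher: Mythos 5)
Your proposal is correct and takes essentially the same route as the paper: induct, lift $\zg_n$ arbitrarily to $\G_{n+1}$, and raise the lift to a power that kills the $p$-part while leaving the restriction to $K_n$ unchanged. The paper simply uses the exponent $p^n$ (which works at once because $p^n\equiv 1\bmod (p-1)$), so your case split on $m$, the auxiliary integer $q$, and the preliminary identification $\langle\ug_n\rangle=\Gal(K_n/K_1)$ (which is in fact never invoked in your inductive step) are all unnecessary.
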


\begin{proof}
Following Lemma \ref{lemma:fandzeta} and Corollary \ref{cor:G1cyclic}, let $\zeta_{p-1}$ be a primitive $p-1$ root of unity, and let $\zg_1$ be the generator of $\G_1$ given by $\zg(\pi_1)\equiv \zeta_{p-1}\pi_1$ mod $\pi_1^2$.  Proceeding by induction, suppose for some $n\geq 1$ there exists a $\zg_n\in \G_n$ of order $p-1$. Let $\hat\zg_{n+1}\in \G_{n+1}$ be any lifting of $\zg_n$, and let $\zg_{n+1}=\hat\zg_{n+1}^{p^n}$.
\end{proof}






\begin{lemma}\label{lemma:zetaaction}
Suppose $p>2$. Let $\rho_n=\zg_n(\pi_n)$.  Then $\{\rho_n\}$ is an $f$-consistent sequence, and $\rho_n\equiv \zeta_{p-1}\pi_n$ mod $\pi_n^2$.
\end{lemma}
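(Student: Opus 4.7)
My plan is to prove the lemma in two steps: first, check that $\{\rho_n\}_{n\geq 0}$ is $f$-consistent; then establish the congruence $\rho_n\equiv\zeta_{p-1}\pi_n\pmod{\pi_n^2}$ by induction on $n$.

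For $f$-consistency, I would set $\rho_0=0$ (matching $\pi_0=0$) and note that $\rho_1=\zg_1(\pi_1)\neq 0$ because $\zg_1$ is a field automorphism and $\pi_1\neq 0$. The recursion $f(\rho_{n+1})=\rho_n$ then follows by applying $\zg_{n+1}$ to $f(\pi_{n+1})=\pi_n$: the coefficients of $f$ lie in $\zp$ and are fixed by $\zg_{n+1}$, while $\zg_{n+1}|_{\G_n}=\zg_n$ by the preceding lemma, so $f(\rho_{n+1})=\zg_{n+1}(f(\pi_{n+1}))=\zg_{n+1}(\pi_n)=\zg_n(\pi_n)=\rho_n$.

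For the congruence, the base case $n=1$ is built into the definition of $\zg_1$. For the inductive step I would use Lemma \ref{lemma:totram}: $\pi_{n+1}$ is a uniformizer of the totally ramified extension $K_{n+1}/\qp$, whose residue field is $\fp$. Since $\zg_{n+1}$ is an isometry, $\rho_{n+1}=u\pi_{n+1}$ for some unit $u$ in the ring of integers of $K_{n+1}$, and the inductive claim reduces to showing $\bar u=\bar\zeta_{p-1}$ in $\fp^\times$. The key input is that $\wideg(f)=p$ together with $v_p(f'(0))=1$ forces $f(x)\equiv a_px^p$ modulo $p\zp[\![x]\!]+x^{p+1}\zp[\![x]\!]$ for some $a_p\in\zp^\times$. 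Because $v_{K_{n+1}}(p)=(p-1)p^n>p$ whenever $n\geq 1$, the identities $f(\pi_{n+1})=\pi_n$ and $f(\rho_{n+1})=\rho_n$ both reduce modulo $\pi_{n+1}^{p+1}$ to
\[
a_p\pi_{n+1}^p\equiv\pi_n\qquad\text{and}\qquad a_p\rho_{n+1}^p\equiv\rho_n.
\]

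Combining these with the inductive hypothesis $\rho_n\equiv\zeta_{p-1}\pi_n\pmod{\pi_n^2}$---which is stronger than a congruence modulo $\pi_{n+1}^{p+1}$ since $v_{K_{n+1}}(\pi_n^2)=2p\geq p+1$---I would conclude $\rho_{n+1}^p\equiv\zeta_{p-1}\pi_{n+1}^p\pmod{\pi_{n+1}^{p+1}}$, and hence $u^p\equiv\zeta_{p-1}\pmod{\pi_{n+1}}$. Reducing to $\fp$ and using that Frobenius is the identity there, $\bar u=\bar u^p=\bar\zeta_{p-1}$, yielding $\rho_{n+1}\equiv\zeta_{p-1}\pi_{n+1}\pmod{\pi_{n+1}^2}$ and completing the induction. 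The main obstacle is nothing more than careful bookkeeping of ramified valuations in $K_{n+1}$; conceptually, the whole argument reduces to the observation that the Galois action on the tangent line is determined mod $p$, and the $p$-th power map on the residue field $\fp$ is trivial.
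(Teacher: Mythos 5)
Your proof is correct and follows essentially the same route as the paper: verify $f$-consistency by applying $\zg_{n+1}$ to $f(\pi_{n+1})=\pi_n$, then induct on $n$ using $f(x)\equiv a_p x^p \bmod (p,x^{p+1})$ together with the valuation estimates to compare $a_p\rho_{n+1}^p$ and $\zeta_{p-1}a_p\pi_{n+1}^p$ modulo $\pi_{n+1}^{p+1}$. The only (cosmetic) difference is that you spell out the final step---extracting the $p$-th root via the unit $\rho_{n+1}/\pi_{n+1}$ and triviality of Frobenius on $\fp$---which the paper leaves implicit in ``which implies our result.''
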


\begin{proof}
The $f$-consistency can be directly verified: $f(\rho_{n+1})=f(\zg_{n+1}(\pi_{n+1}))=\zg_{n+1}(f(\pi_{n+1}))=\zg_n(\pi_n)=\rho_n$. 

Lemma \ref{lemma:fandzeta} provides the second part of the result for $n=1$, so we proceed by induction on $n$.  Suppose that for some $n\geq 1$,  $\rho_n\equiv \zeta_{p-1}\pi_n$ mod $\pi_n^2$. By Lemma \ref{lemma:totram}, $v_p(\pi_n^2)=2/(p^{n-1}(p-1))>v_p(\pi_{n+1}^{p+1})=(p+1)/(p^n(p-1))$, and so $\rho_n\equiv\zeta_{p-1}\pi_n$ mod $\pi_{n+1}^{p+1}$.
By our hypothesis on the commuting pair, $f(x)\equiv ax^p$ mod $(p, x^{p+1})$ for some $a\in\zp^\times$.  And by Lemma \ref{lemma:totram}, $v_p(\pi_{n+1}^{p+1})=v_p(\rho_{n+1}^{p+1})=(p+1)/(p^n(p-1))<1=v_p(p)$.  Thus, $\rho_n=f(\rho_{n+1})\equiv a\rho_{n+1}^p$ mod $\pi_{n+1}^{p+1}$. 
 Finally, $\zeta_{p-1}\pi_n=\zeta_{p-1}f(\pi_{n+1})\equiv a\pi_{n+1}^p$ mod $\pi_{n+1}^{p+1}$.  Therefore, $a\rho_{n+1}^p\equiv \zeta_{p-1}a\pi_{n+1}^p$ mod $\pi_{n+1}^{p+1}$, which implies our result.
\end{proof}

\begin{prop}
Suppose $p>2$ and $f,u\in\snc(\zp)$ is a minimal commuting pair.
If $\qp(\pi)/\qp$ is Galois for all $\pi\in\Llf$ then $\G\cong Z_{p-1}\times\zp$.  In particular, $\G$ is abelian.
\end{prop}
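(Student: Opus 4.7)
The plan is to exhibit $\G$ as a split extension of $\G_1$ by the pro-$p$ subgroup $\overline{\langle\ug\rangle}$, and then to promote the splitting to a direct product. By Lemma \ref{lemma:cyclicsubgroups}(1), each $\ug_n$ generates the (unique, hence normal) Sylow $p$-subgroup of $\G_n$, which has order $p^{n-1}=[K_n:K_1]$ and therefore coincides with $\Gal(K_n/K_1)$. Passing to the inverse limit, $\overline{\langle\ug\rangle}=\Gal(K/K_1)$ is a normal procyclic pro-$p$ subgroup of $\G$ isomorphic to $\zp$, while Corollary \ref{cor:G1cyclic} gives $\G_1\cong Z_{p-1}$. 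The preceding lemma furnishes a $\zg\in\G$ of order $p-1$ mapping to a generator of $\G_1$, so the exact sequence
$$1\longrightarrow \overline{\langle\ug\rangle}\longrightarrow \G \longrightarrow \G_1 \longrightarrow 1$$
splits, giving $\G\cong \langle\zg\rangle\ltimes\overline{\langle\ug\rangle}\cong Z_{p-1}\ltimes\zp$.

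It remains to show that this semidirect product is direct, i.e.\ $\zg\ug=\ug\zg$. Normality of $\overline{\langle\ug\rangle}$ yields $\zg^{-1}\ug\zg=\ug^t$ for a unique $t\in\zp^\times$, and since $\zg$ has order $p-1$ the corresponding automorphism of $\overline{\langle\ug\rangle}\cong\zp$, namely multiplication by $t$ on $\zp$, satisfies $t^{p-1}=1$. Because the only $(p-1)$-st root of unity in $\zp^\times$ congruent to $1$ modulo $p$ is $1$ itself, it suffices to establish $t\equiv 1\pmod p$.

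To compute $t\bmod p$, I would pass to reductions via the realization machinery of Remark \ref{remark:realization}. Pick a realization $\{g_n\}$ of $\zg$ and let $z\in\Gamma$ be an accumulation point. Lemma \ref{lemma:zetaaction} shows that $\bar z(x)=\bar\zeta_{p-1}\,x+O(x^2)$, where $\bar\zeta_{p-1}\in\fp^\times$ is a primitive $(p-1)$-st root of unity. Lemma \ref{lemma:normalcylifts} then translates the relation $\zg^{-1}\ug\zg=\ug^t$ into $\bar z^{-1}\circ\bar u\circ\bar z=\bar u^{\circ t}$ in $\fp[\![x]\!]$. The essential input is the leading form of $\bar u$: by Lemma \ref{lemma:basecase} the nonzero fixed points of $u$ are precisely the $p-1$ nonzero elements of $\Llf(1)$, and by Lemma \ref{lemma:totram} each has $\qp$-valuation $1/(p-1)$. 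Combined with $v_p(u'(0)-1)=1$, the Newton polygon of $u(x)-x$ is forced to consist of a single segment from $(1,1)$ to $(p,0)$, whence
$$\bar u(x)=x+\bar c_p\,x^p+O(x^{p+1}),\qquad \bar c_p\neq 0.$$
A direct leading-term computation, in which the identity $\bar\zeta_{p-1}^{\,p-1}=1$ is applied once, then gives $\bar z^{-1}\bar u\bar z(x)=x+\bar c_p\,x^p+O(x^{p+1})$, while $\bar u^{\circ t}(x)=x+t\bar c_p\,x^p+O(x^{p+1})$. Comparing $x^p$-coefficients forces $t\equiv 1\pmod p$, hence $t=1$, and the abelianness of $\G$ follows.

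The main obstacle is pinning down the leading form of $\bar u$: everything else is structural, but the collapse $t\equiv 1\pmod p$ is powered precisely by the coincidence that the first deviation of $\bar u$ from the identity occurs at $x^p$, and that $\bar\zeta_{p-1}^{\,p-1}=1$ in $\fp^\times$. Extracting this leading form requires carefully combining the fixed-point count (Lemma \ref{lemma:basecase}), the total ramification data (Lemma \ref{lemma:totram}), and the valuation hypothesis $v_p(u'(0)-1)=1$ via a Newton polygon argument.
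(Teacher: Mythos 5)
Your proof is correct, and the overall architecture --- identify $\overline{\langle\ug\rangle}\cong\zp$ as a normal subgroup giving $\zg\ug\zg^{-1}=\ug^t$ with $t^{p-1}=1$, then pin down $t\equiv 1\pmod p$ --- is the same as the paper's. The place where you diverge is in how you compute $t\bmod p$. The paper works directly in $K_2$: it writes $\rho_2=\zg(\pi_2)=\zeta_{p-1}\pi_2+c_2\pi_2^2+\cdots$, uses $u(x)\equiv x+bx^p\pmod{(p,x^{p+1})}$ and the identity $\zg\ug=\ug^t\zg$, and runs a chain of congruences modulo $\pi_2^{p+1}$ to conclude $b\zeta_{p-1}\pi_2^p\equiv tb\zeta_{p-1}\pi_2^p$, hence $t\equiv 1\pmod p$. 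You instead pass to the residue field: using the realization machinery of Remark \ref{remark:realization} and Lemma \ref{lemma:normalcylifts}, you translate $\zg^{-1}\ug\zg=\ug^t$ into $\bar z^{-1}\circ\bar u\circ\bar z=\bar u^{\circ t}$ in $\fp[\![x]\!]$, identify the leading form $\bar u(x)=x+\bar c_px^p+\cdots$ via the Newton polygon of $u(x)-x$ (forced by Lemma \ref{lemma:basecase} and Lemma \ref{lemma:totram} together with $v_p(u'(0)-1)=1$), and compare $x^p$-coefficients. Both routes require the same leading-term information on $u$; the paper's computation is more elementary (no appeal to the normalizer lemma) but demands careful valuation bookkeeping in $K_2$, whereas yours trades that for an upfront investment in Lemma \ref{lemma:normalcylifts} and yields a cleaner one-line coefficient comparison over $\fp$. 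One small point worth making explicit if you wrote this out in full: the $t$ appearing in Lemma \ref{lemma:normalcylifts} is by construction the same $t$ from the conjugation relation, so the comparison is legitimate; and the convexity of the Newton polygon gives $v_p(b_i)\geq 1$ for $1<i<p$, which is what makes the reduction $\bar u(x)=x+\bar c_px^p+O(x^{p+1})$ exact rather than merely a leading-order heuristic.
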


\begin{proof}
Let $\zg=\displaystyle{\lim_{\leftarrow}\zg_n}$ and $\ug=\displaystyle{\lim_{\leftarrow}\ug_n}$. Then $\left<\ug\right>\cong\zp$ is a normal subgroup of $\G$ since for each $n$, $\ug_n$ generates the unique Sylow $p$-subgroup of $\G_n$ by Lemma \ref{lemma:cyclicsubgroups}.  Therefore, $\zg\ug\zg^{-1}=\ug^t$ for some $t\in\zp$.  Moreover, $\zg\ug\zg^{-1}=\zg^p\ug\zg^{-p}=\ug^{t^p}$. So $t$ must be a $p-1$ root of unity. We will complete the proof by showing that $t\equiv 1$ mod $p$, and hence $t=1$.

Following the notation and result of Lemma \ref{lemma:zetaaction}, write $\rho_2=\zeta_{p-1}\pi_2+c_2\pi_2^2+c_3\pi_2^3+\cdots$ with $c_i\in\zp$.  Recall from the hypothesis on the commuting pair that $u(x)\equiv x+bx^p$ mod $(p,x^{p+1})$ for some $b\in \zp^\times$, and so $u^{\circ t}(x)\equiv x+tbx^p$ mod $(p,x^{p+1})$.  The following congruences are mod $\pi_2^{p+1}$.
\begin{eqnarray*}
\rho_2+b\zeta_{p-1}\pi_2^p&\equiv&\rho_2+b\rho_2^p\\
&\equiv&u(\rho_2)\\
&=&u(\zg(\pi_2)\\
&=&\zg(u(\pi_2))\\
&=&\zg\ug(\pi_2)\\
&=&\ug^t\zg(\pi_2)\\
&\equiv&\ug^t(\zeta_{p-1}\pi_2+c_2\pi_2^2+c_3\pi_2^3+\cdots)\\
&=&\zeta_{p-1}\ug^t(\pi_2)+c_2\ug^t(\pi_2)^2+c_3\ug^t(\pi_2)^3+\cdots\\
&=&\zeta_{p-1}u^{\circ t}(\pi_2)+c_2u^{\circ t}(\pi_2)^2+c_3u^{\circ t}(\pi_2)^3+\cdots\\
&\equiv&\zeta_{p-1}(\pi_2+tb\pi_2^p)+c_2\pi_2^2+c_3\pi_2^3+\cdots\\
&=&\rho_2+tb\zeta_{p-1}\pi_2^p.
\end{eqnarray*}
Therefore, $t\equiv 1$ mod $p$, concluding the proof.
\end{proof}

\begin{cor}
Suppose $p>2$. Then $\G_n\cong Z_{p-1}\times Z_{p^{n-1}}$.
\end{cor}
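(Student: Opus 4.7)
The plan is to deduce this corollary as an almost immediate consequence of the preceding proposition together with the order and cyclicity information already established. Since $\G \cong Z_{p-1} \times \zp$ is abelian, every finite quotient $\G_n = \Gal(K_n/\qp)$ is abelian, so it remains only to pin down its structure as an abelian group of order $(p-1)p^{n-1}$.

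First I would record the order: by Lemma \ref{lemma:totram}, $K_n/\qp$ is totally ramified of degree $(p-1)p^{n-1}$, so $|\G_n| = (p-1)p^{n-1}$. Next I would exhibit two cyclic subgroups of $\G_n$ whose orders multiply to $|\G_n|$. The element $\ug_n$ has order $p^{n-1}$ by Lemma \ref{lemma:cyclicsubgroups}(1) (equivalently, by Proposition \ref{prop:countingroots} with $\delta=1$, since $u^{\circ i}(\pi_n) = \pi_n$ iff $p^{n-1} \mid i$). The element $\zg_n$ has order $p-1$ by construction. Thus $\langle \ug_n \rangle \cong Z_{p^{n-1}}$ and $\langle \zg_n \rangle \cong Z_{p-1}$ inside the abelian group $\G_n$.

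The remaining step is to see that $\langle \ug_n \rangle \cap \langle \zg_n \rangle$ is trivial, which is immediate from coprimality of their orders $p^{n-1}$ and $p-1$. Consequently the product map
\[
\langle \zg_n \rangle \times \langle \ug_n \rangle \longrightarrow \G_n, \qquad (\zg_n^a, \ug_n^b) \longmapsto \zg_n^a \ug_n^b
\]
is an injective homomorphism (well defined because $\G_n$ is abelian, injective by the coprime-order intersection argument), and a count of orders shows it is an isomorphism. This yields $\G_n \cong Z_{p-1} \times Z_{p^{n-1}}$.

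There is essentially no obstacle here: all the nontrivial input is in the proposition (abelianness of $\G$) and in the preceding lemmas (orders of $\ug_n$ and $\zg_n$). The only thing to be slightly careful about is that one really needs $\G_n$ abelian to conclude the internal direct product decomposition — using $\G \cong Z_{p-1} \times \zp$ makes this automatic, since any continuous quotient of an abelian profinite group is abelian.
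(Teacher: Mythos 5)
Your proposal is correct and matches the approach the paper leaves implicit: the corollary is stated without proof immediately after the proposition $\G \cong Z_{p-1}\times\zp$, precisely because the decomposition of $\G_n$ into the cyclic subgroups $\langle\zg_n\rangle$ (order $p-1$, from the lemma constructing $\zg_n$) and $\langle\ug_n\rangle$ (order $p^{n-1}$, from Lemma \ref{lemma:cyclicsubgroups}) with coprime orders inside the abelian group of order $(p-1)p^{n-1}$ is exactly the ``immediate'' argument intended. No gap.
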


\subsection{Proof that $\G$ is abelian if $p=2$}
 
\begin{prop}
Suppose $p=2$ and $f,u\in\snc(\zp)$ is a minimal commuting pair.
If $\q_2(\pi)/\q_2$ is Galois for all $\pi\in\Llf$ then $\G\cong Z_2\times\z_2$.  In particular, $\G$ is abelian.
\end{prop}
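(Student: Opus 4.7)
The plan is to parallel the $p>2$ case, suitably modified for the prime $2$. As in that case, I would: (i) construct a compatible system $\zg_n \in \G_n$ of order $2$, (ii) show that $\zg = \varprojlim \zg_n$ commutes with $\ug$ by translating the relation $\zg \ug \zg^{-1} = \ug^t$ into a computable congruence at some low level, and (iii) assemble the direct product $\G = \langle \zg \rangle \times \langle \ug \rangle \cong Z_2 \times \z_2$.

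The initial setup is direct: Proposition \ref{prop:countingroots} (with $\delta = 2$) gives $\ug_2 = 1$, so $\langle \ug \rangle = \ker(\G \to \G_2)$, both being index-$2$ subgroups with $\langle\ug\rangle \subseteq \ker$. Let $\zg_2$ denote the unique nontrivial element of $\G_2$. As an analog of Lemma \ref{lemma:zetaaction}, I would show $\rho_n := \zg_n(\pi_n) \equiv -\pi_n \pmod{\pi_n^2}$: at level $n=2$, $\pi_2$ and $\rho_2$ are the two roots of a monic Eisenstein quadratic over $\z_2$ whose linear coefficient has $2$-adic valuation $\geq 1$ (coming from the shape of the WPT factorization of $f(x)-\alpha$), so $\rho_2 \equiv -\pi_2 \pmod{\pi_2^2}$; the inductive step mirrors the $p>2$ argument.

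The construction of $\zg_n$ is where the proof diverges most sharply from the odd-prime case. For $p>2$, replacing any lift $\hat\zg_{n+1}$ by $\hat\zg_{n+1}^{p^n}$ cleared the $p$-part of the order while preserving the $(p-1)$-part; at $p=2$ this same operation destroys the torsion we wish to keep. I would instead proceed by a coupled induction: given $\zg_n$, choose a lift $\hat\zg_{n+1}\in\G_{n+1}$ and write $\hat\zg_{n+1}^2 = \ug_{n+1}^b$ (noting $\hat\zg_{n+1}^2$ lies in the order-$2$ kernel $\langle\ug_{n+1}^{2^{n-2}}\rangle$) and $\hat\zg_{n+1}\ug_{n+1}\hat\zg_{n+1}^{-1} = \ug_{n+1}^t$. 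Adjusting to $\zg_{n+1} = \hat\zg_{n+1}\ug_{n+1}^c$ yields order $2$ precisely when $b + c(t+1) \equiv 0$, which is solvable provided $t \equiv 1$ (so $t+1\equiv 2$) and $b$ is even. Thus the existence of the compatible family is tied to the commutation relation.

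The commutation computation proceeds as in the proposition for $p>2$, mutatis mutandis. The relation $\zg\ug = \ug^t\zg$ in $\G$ translates via Lemma \ref{lemma:leftright} to $u(\rho_n) = \rho_n\bigl(u^{\circ t}(\pi_n)\bigr)$ in $K_n$, where $\rho_n(x)$ is the realization of $\zg_n$. Expanding both sides to a sufficiently high power of $\pi_n$ and using the minimal pair conditions—in particular $v_2(u'(0)-1) = 2$, together with the restrictions on the low-order coefficients of $u(x) = u'(0)x + a_2x^2 + \cdots$ forced by $fu = uf$ (which give $v_2(a_2)\geq 1$ and $v_2(a_3)\geq 1$)—one obtains a congruence of the form $t \equiv 1 \pmod 4$. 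Since also $t^2 = 1$, forcing $t \in \{\pm 1\}$, this pins $t = 1$. The main obstacle will be this interleaving of the involution construction with the commutation calculation: both ingredients are necessary and must be managed simultaneously, whereas in the $p>2$ setting the coprimality of $p-1$ and $p$ decoupled them cleanly.
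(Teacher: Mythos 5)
Your proposal takes a genuinely different route from the paper, and the route you have sketched has real gaps that your own framing acknowledges but does not close.

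The paper's proof does \emph{not} mimic the $p>2$ congruence computation. Instead it writes $\bar f(x)=\varphi(x^2)$, observes $\varphi$ commutes with $\bar u$, and case-splits on whether $\varphi\in\cala_{\bar u}$. If yes, $u^{\circ-t}\circ f$ is literally a Lubin--Tate endomorphism and everything follows at once. If no, $\varphi$ represents the nontrivial coset in $\Normsep_{\f_2}(\cala_{\bar u})/\cala_{\bar u}$, which by Theorem~\ref{thm:LMS} has order dividing $e(\bar u)=2$; since $\varphi$ centralizes $\cala_{\bar u}$, the whole normalizer is abelian, and Lemma~\ref{lemma:normalcylifts} transports this back to $\G$. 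The non-cyclicity is then read off from $\G_3$ directly. The field-of-norms input is the whole point; your proposal never touches it.

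Turning to the details of your sketch: the claim $\rho_n\equiv-\pi_n\pmod{\pi_n^2}$ is vacuous when $p=2$, because $v_2(2)=2^{n-1}v_{K_n}(2)\gg 1$ so $-\pi_n\equiv\pi_n\pmod{\pi_n^{2}}$ automatically; the paper's actual input (Lemma~\ref{lemma:p2torsion}) is the strictly stronger $\rho_n\equiv\pi_n+\pi_n^2\pmod{\pi_n^3}$, a genuine wild-ramification fact proved via the Artin--Schreier shape of degree-$2$ totally ramified extensions. You also acknowledge, correctly, that the construction of the compatible involutions $\zg_n$ and the computation $t=1$ are intertwined, but the ``coupled induction'' as stated does not pin things down: once $n\geq 5$ the modular group $M_{2^{n-1}}$ has $t\equiv 1\pmod 4$ yet $t\neq 1$, so a mod-$4$ congruence alone never suffices; you would need $t^2=1$ in $\z_2$ (which requires having already lifted an order-$2$ element to $\G$) together with the mod-$4$ bound. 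Finally, the valuation bookkeeping that made the $p>2$ display-computation close (e.g.\ $v_p(p)>v_p(\pi_2^{p+1})$) fails at $p=2$, so the analogous congruence would have to run at a higher level $n$ and to a different modulus, none of which is worked out. In short, a direct congruence attack may be salvageable, but what is written is not a proof, and it bypasses the tool---Theorem~\ref{thm:LMS}---that makes the paper's argument short.
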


\begin{proof}
Write $\bar f(x)=\varphi(x^p)$ where $\varphi(x)\in\snc(\fp)$.  Observe that $\varphi$ and $\bar u$ commute.  

If $\varphi\in\cala_{\bar u}$, then $\varphi=\bar u^{\circ t}$ for some $t\in\zp$. Therefore, $u^{\circ -t}\circ f$ has the following two properties: it is congruent to $x^p$ mod $p$, and its linear coefficient is a uniformizer in $\zp$.  In other words, $u^{\circ -t}\circ f$ is an endomorphism of a height-one formal $\zp$-module.  Since $f$ and $u$ commute with $u^{\circ -t}\circ f$, they must both be endomorphisms of the same formal group. Our result follows.

If $\varphi\notin\cala_{\bar u}$, then by Theorem \ref{thm:LMS}, $\Normsep_\fp(\cala_{\bar u})$ must be abelian.  By Lemma \ref{lemma:normalcylifts}, $\G$ must be abelian as well.

Let $\Oof(2)=\{\beta,\beta'\}$ and $\Oof(3)=\{\gamma_1,\gamma_2,\gamma'_1,\gamma'_2\}$ such that $f(\gamma_1)=f(\gamma_2)=\beta$ and $f(\gamma'_1)=f(\gamma'_2)=\beta'$. From Proposition \ref{prop:countingroots}, we know that $\ug_2(\beta)=u(\beta)=\beta'$, and so $\ug_3(\gamma_1)$ is a root of $f(x)-\beta'$; say $\ug_3(\gamma_1)=u(\gamma_1)=\gamma'_1$. Since $\gamma_1\in\Llu(1)$, $\ug_3^2(\gamma_1)=u^{\circ 2}(\gamma_1)=\gamma_1$.  Clearly, $\ug_3(\gamma_2)=\gamma'_2$.  Let $\zg_3\in \G_3$ be given by $\zg_3(\gamma_1)=\gamma'_2$. Then $\zg_3^2(\gamma_1)=\zg_3(\gamma'_2)=\gamma_1$ since $\ug_3(\gamma'_2)=\gamma_2$. Thus, $\G_3$ contains two elements of order $2$, and so it is not cyclic.
\end{proof}

\begin{cor}
Suppose $p=2$.  Then for $n\geq 2$, $\G_n\cong Z_2\times Z_{2^{n-2}}$.
\end{cor}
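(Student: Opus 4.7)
The plan is to deduce this corollary from the preceding proposition, which identifies $\G \cong Z_2 \times \z_2$, together with the non-cyclicity of $\G_3$ already exhibited inside its proof. For $n = 2$, Lemma \ref{lemma:totram} gives $|\G_2| = 2^{n-1} = 2$, so $\G_2 \cong Z_2 \cong Z_2 \times Z_{2^0}$ and the statement is immediate.

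For $n \geq 3$, I would first narrow the possible structures of $\G_n$ by classifying the finite continuous quotients of $Z_2 \times \z_2$. This group is topologically generated by one element of order $2$ (from the $Z_2$ factor) together with one element of infinite order (a topological generator of the $\z_2$ factor), so any continuous surjection onto a finite abelian $2$-group $A$ presents $A$ as $\langle a \rangle + \langle b \rangle$ with $a$ of order dividing $2$. Either $a \in \langle b \rangle$, in which case $A$ is cyclic, or $\langle a \rangle \cap \langle b \rangle = \{0\}$, in which case $A \cong Z_2 \times \langle b \rangle$. Combined with $|\G_n| = 2^{n-1}$ from Lemma \ref{lemma:totram}, this leaves only two possibilities for $\G_n$, namely $Z_{2^{n-1}}$ or $Z_2 \times Z_{2^{n-2}}$.

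To rule out the cyclic possibility it suffices to verify that $\G_3$ is non-cyclic, since the restriction $\G_n \twoheadrightarrow \G_3$ is surjective for $n \geq 3$ and any quotient of a cyclic group is cyclic. The non-cyclicity of $\G_3$ is already demonstrated inside the proof of the preceding proposition: the automorphism $\ug_3$ has order $2^{3-2} = 2$ by Lemma \ref{lemma:cyclicsubgroups} and satisfies $\ug_3(\gamma_1) = \gamma'_1$, while the automorphism $\zg_3$ defined there by $\gamma_1 \mapsto \gamma'_2$ was shown to satisfy $\zg_3^2 = \mathrm{id}$. These are two distinct elements of order $2$, whereas a cyclic $2$-group has only one. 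Hence $\G_n \cong Z_2 \times Z_{2^{n-2}}$ for every $n \geq 2$.

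The argument is essentially bookkeeping, so I anticipate no real obstacle: the substantive content has already been supplied by the preceding proposition and by Lemmas \ref{lemma:totram} and \ref{lemma:cyclicsubgroups}.
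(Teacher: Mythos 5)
Your argument is correct and is essentially the derivation the paper leaves implicit: the corollary is stated without proof immediately after the proposition identifying $\G\cong Z_2\times\z_2$, and you supply exactly the expected bookkeeping — classifying the finite quotients of $Z_2\times\z_2$, computing $|\G_n|=2^{n-1}$ from Lemma \ref{lemma:totram}, and ruling out the cyclic case via the non-cyclicity of $\G_3$ already exhibited in the proposition's proof.
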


\begin{cor}
The group $\Normsep_\fp(\cala_{\bar u})$ is abelian.
\end{cor}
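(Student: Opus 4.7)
The plan is to use the abelianness of $\G$ together with the upper bound $[\Normsep_\fp(\cala_{\bar u}):\cala_{\bar u}]\mid e$ from Theorem~\ref{thm:LMS}: it will suffice to exhibit a single element $\bar h\in\Normsep_\fp(\cala_{\bar u})\setminus\cala_{\bar u}$ which commutes with $\bar u$ and whose image in $\Normsep_\fp(\cala_{\bar u})/\cala_{\bar u}$ has order $e$. Given such an $\bar h$, one gets $\langle\bar h,\cala_{\bar u}\rangle=\Normsep_\fp(\cala_{\bar u})$, and since this subgroup is generated by the commuting pair $\bar h$ and $\bar u$, it is abelian.

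For $p>2$, I would take $\bar h$ to be the reduction of an accumulation point of the realization $\{g_n\}$ of the order-$(p-1)$ element $\zg\in\G$ constructed just before Lemma~\ref{lemma:zetaaction}. Lemma~\ref{lemma:normalcylifts} places $\bar h$ in $\Norm_\fp(\bar u)$, and because $\G$ is abelian the implicit conjugation exponent in that lemma collapses to $1$, so $\bar h$ centralizes $\bar u$. Lemma~\ref{lemma:zetaaction} forces every $g_n$ to have linear coefficient equal to the Teichm\"uller lift of $\zeta_{p-1}$, hence $\bar h'(0)=\zeta_{p-1}$; in particular $\bar h$ is separable and lies in $\Normsep_\fp(\cala_{\bar u})$. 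Since every element of $\cala_{\bar u}$ has linear coefficient $\bar u'(0)^t=1$, the image of $\bar h$ modulo $\cala_{\bar u}$ has order exactly $p-1=e$, which is precisely what is needed.

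For $p=2$ the linear coefficient sits in $\f_2^\times=\{1\}$ and is useless, so I would instead adopt the dichotomy used in the proof of the $p=2$ proposition. When $\varphi\notin\cala_{\bar u}$ (recalling $\bar f(x)=\varphi(x^p)$), set $\bar h=\varphi$: the proposition shows $\varphi$ commutes with $\bar u$, and by hypothesis $\varphi$ is outside $\cala_{\bar u}$. When $\varphi\in\cala_{\bar u}$, the proposition's proof already furnishes a height-one formal group $F$ over $\zp$ with $f,u\in\End_{\zp}(F)$, and I would take $\bar h=\overline{[-1]_F}$: it commutes with $\bar u$ because formal-group endomorphisms commute, and a direct expansion of the formal group law shows $\overline{[-1]_F}\neq\overline{[1]_F}$ even over $\f_2$, so $\bar h$ is a nontrivial involution in $\Nott(\fp)$. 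Because the finiteness of $e(\bar u)$ forces $\cala_{\bar u}\cong\z_2$ to be torsion-free, such an involution cannot lie in $\cala_{\bar u}$, and $\bar h\cala_{\bar u}$ fills the unique nontrivial coset of the order-at-most-$2$ quotient. The step I expect to be the main obstacle is this last subcase, where neither the linear coefficient nor a direct realization argument distinguishes $\bar h$ from $\cala_{\bar u}$; routing through the formal-group data $[-1]_F$ supplied by that very subcase of the proposition is the maneuver that gets past the obstruction.
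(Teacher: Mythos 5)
Your proposal is correct and, where the paper actually gives an argument, it is the same argument. The corollary appears in the paper with no proof, immediately following the $p=2$ proposition, and the only place the paper explicitly addresses abelianness of $\Normsep_\fp(\cala_{\bar u})$ is inside that proposition's proof: in the sub-case $\varphi\notin\cala_{\bar u}$ the paper observes that $\varphi$ is a commuting element of the normalizer outside $\cala_{\bar u}$ and invokes Theorem~\ref{thm:LMS} to bound the index by $2$, exactly as you do. For the sub-case $\varphi\in\cala_{\bar u}$ the paper simply records that $f,u$ become endomorphisms of a height-one formal $\zp$-module and declares the result follows; you fill this in by reducing $[-1]_F$, and your verification is sound — the $XY$-coefficient of any height-one $F/\z_2$ is odd (since $[2]_F\equiv x^2$ mod $2$ forces $c_{11}\equiv 1$), so $\overline{[-1]_F}(x)=x+x^2+\cdots\neq x$, and it cannot land in the torsion-free $\cala_{\bar u}\cong\z_2$. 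One could also remark that in this sub-case the Main Theorem is already established, so the corollary is not really needed downstream; but as stated it is unconditional and your argument closes the gap.

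Your $p>2$ argument (take $\bar h=\bar w$, use Lemma~\ref{lemma:zetaaction} to pin $\bar h'(0)=\zeta_{p-1}$, use abelianness of $\G$ to force $t=1$ in the proof of Lemma~\ref{lemma:normalcylifts}, and count cosets by linear coefficient) is also correct, and in spirit it is what the paper develops in \S3.3 when it constructs $\bar w$ and proves Proposition~\ref{prop:rbarcommutes}. The one cosmetic mismatch is that the paper only writes down the realization $\{w_n\}$ and its accumulation point $w$ \emph{after} stating this corollary, so if the corollary is intended unconditionally (rather than being implicitly a $p=2$ statement following the sub-section heading) then your write-up supplies a proof the paper leaves to the reader. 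Minor caveat worth flagging: the statement of Lemma~\ref{lemma:normalcylifts} only asserts $\bar g\in\Norm_\fp(\bar u)$; the collapse of the conjugation exponent to $1$ lives in that lemma's proof, not its statement, so strictly speaking you are re-running the proof of Lemma~\ref{lemma:normalcylifts} (as the paper itself does in Proposition~\ref{prop:rbarcommutes}) rather than merely citing it.
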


The $f$-consistent sequence defined in Lemma \ref{lemma:zetaaction} for $p>2$ has an analogue for $p=2$: let $\zg\in\G\cong Z_2\times Z_{2^{n-2}}$ be the generator of $Z_2$,  and define $\rho_n=\zg(\pi_n)$.
For each $n$, write the $\pi_n$-adic expansion of $\rho_n$ as
$\rho_n=\zg(\pi_n)=\sum_{i=1}^{\infty}c_{i,n}\pi_n^i$,
where $c_{1,n}$ is a fixed primitive $p-1$ root of unity $\zeta_{p-1}$, and $c_{i,n}^p=c_{i,n}$.  
Let $w_n(x)=\sum_{i=1}^\infty c_{i,n}x^i$.  Recall from Remark \ref{remark:realization} that $\{w_n\}$ is a realization of $\zg$ in the compact set $\Gamma$, and so it has a convergent subsequence $\{w_{n_\ell}\}$ and a corresponding accumulation point $w$.

\begin{lemma}\label{lemma:p2torsion}
Suppose $p=2$.  Then $w_n\equiv x+x^2$ mod $x^3$ for all $n$, and so $w(x)\equiv x+x^2$ mod $x^3$ as well.
\end{lemma}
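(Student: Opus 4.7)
The plan is to use the order-$2$ relation $\zg^2=\id$ in $\G$ to derive a composition identity. Writing $\rho_n=w_n(\pi_n)=\pi_n+c_{2,n}\pi_n^2+c_{3,n}\pi_n^3+\cdots$ in Teichm\"uller coordinates and using that $\zg$ fixes every Teichm\"uller representative $c_{i,n}\in\{0,1\}$, the relation $\zg(\rho_n)=\pi_n$ translates into $w_n(\rho_n)=\pi_n$, i.e., $w_n^{\circ 2}(\pi_n)=\pi_n$. Thus $\pi_n$ is a root of the power series $W_n(x):=w_n^{\circ 2}(x)-x\in\z_2[\![x]\!]$, which is nonzero because the subgroup $\{g\in\snc(\z_2):g'(0)=1\}$ is torsion-free under composition (formal logarithm), and $w_n=x$ would force $\rho_n=\pi_n$, contradicting the nontriviality of $\zg_n$ for $n\ge 2$. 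Applying the Weierstrass Preparation Theorem to $W_n$ and using that the minimal polynomial of $\pi_n$ over $\q_2$ has degree $2^{n-1}$ (Lemma \ref{lemma:totram}), I would obtain the lower bound $\wideg(W_n)\ge 2^{n-1}$.

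The main step would then be to compute $\wideg(W_n)$ in terms of the Teichm\"uller coefficients. A direct expansion shows $\bar W_n\equiv c_{2,n}(1+c_{3,n})x^4\pmod{x^5}$, so if $c_{2,n}=1$ and $c_{3,n}=0$ then $\wideg(W_n)=4$ and the bound fails once $n\ge 4$; symmetrically, if $c_{2,n}=0$ and $c_{k,n}$ is the first nonzero Teichm\"uller coefficient with $k\ge 3$, then a further characteristic-$2$ analysis---exploiting that $(x+x^k)^k$ contains the unit coefficient $\binom{k}{k}=1$ at $x^{k^2}$, independently of $n$---bounds $\wideg(W_n)$, or its effective analog after dividing out powers of $2$, by a function of $k$ alone. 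For $n$ sufficiently large this upper bound violates $\wideg(W_n)\ge 2^{n-1}$, so $c_{k,n}=0$ must propagate for every $k$, forcing $\rho_n=\pi_n$ and again contradicting $\zg_n\ne\id$. I would conclude $c_{2,n}=1$, which combined with $c_{1,n}=1$ (the $p=2$ analogue of $\zeta_{p-1}$) gives $w_n\equiv x+x^2\pmod{x^3}$ for all $n\ge 2$ (the case $n=1$ being trivial, with $K_1=\q_2$ and $\zg_1=\id$).

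Finally, coefficient-wise convergence of the subsequence $\{w_{n_\ell}\}$ to $w$ transfers the congruence to the accumulation point, giving $w(x)\equiv x+x^2\pmod{x^3}$. The hard part will be handling the degenerate case where $\bar w_n^{\circ 2}=x$ identically in $\f_2[\![x]\!]$---that is, where $\bar w_n$ is a genuine torsion element of $\Nott(\f_2)$---because then $\bar W_n=0$ and the naive Weierstrass argument degenerates. In that case I would divide $W_n$ by the appropriate power of $2$ and apply Weierstrass to the quotient, bounding the total $2$-adic content of $W_n$ via Sen's theorem applied to the lower ramification numbers of $\bar w_n\in\Normsep_{\f_2}(\cala_{\bar u})$; this is how the structural theory of the normalizer developed earlier in the paper would re-enter and close the argument.
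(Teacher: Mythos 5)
Your approach is genuinely different from the paper's, which disposes of the lemma in one line by citing ramification theory: since $\zg$ is nontrivial on $K_2$ (equivalently, $K_2\not\subset K^\zg$), and $\zg$ is an order-$2$ element of the abelian group $\G\cong\z_2^\times$, class-field and ramification theory (Serre, Chapter~IV) force the lower ramification number $i_{K_n}(\zg)=v_{K_n}(\zg(\pi_n)-\pi_n)-1$ to equal $1$, i.e.\ $c_{2,n}\neq 0$. Your proposal instead tries to extract the same conclusion from the Weierstrass Preparation Theorem applied to $W_n=w_n^{\circ 2}-x$.

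The preliminary reductions are fine: the identity $w_n^{\circ 2}(\pi_n)=\pi_n$, the observation that $W_n\neq 0$ (torsion-freeness of $\{g\in\snc(\z_2):g'(0)=1\}$ plus $\zg_n\neq\id$), the bound $\wideg(W_n)\geq 2^{n-1}$ from the minimal polynomial of $\pi_n$, and the mod-$x^5$ computation $\bar W_n\equiv c_{2,n}(1+c_{3,n})x^4$ are all correct. But this only rules out $c_{2,n}=1,c_{3,n}=0$ (and only for $n\geq 4$); the case you actually need to eliminate is $c_{2,n}=0$, and here the logic runs the wrong way. If $c_{2,n}=0$ and $k=\wideg(w_n-x)\geq 3$, your characteristic-$2$ analysis gives an \emph{upper} bound on $\wideg(\bar W_n)$ in terms of $k$ (roughly $2k-1$ in the clean odd-$k$ case). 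Combined with $\wideg(W_n)\geq 2^{n-1}$, this yields a \emph{lower} bound on $k$ of order $2^{n-2}$---not a contradiction, merely the statement that $\zg(\pi_n)-\pi_n$ would have to have very large valuation. To turn that into a contradiction you need an a priori \emph{upper} bound on $v_{K_n}(\zg(\pi_n)-\pi_n)$ that is uniform in $n$, and that is precisely the ramification-theoretic input the paper invokes and your argument omits. WPT by itself supplies only lower bounds on Weierstrass degrees, which is the wrong direction for this step.

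The appeal to the normalizer structure and Sen's theorem in your final paragraph does not close this gap either: the structural results (Theorem~\ref{thm:LMS}, the normalizer of $\cala_{\bar u}$) apply to series known to commute with $\bar u$, and the paper only establishes that the \emph{limit} $\bar w$ commutes with $\bar u$ (Proposition~\ref{prop:rbarcommutes}), not the individual $\bar w_n$. Moreover, that proposition in turn relies on Lemma~\ref{lemma:p2torsion} to pin down the order of $\bar w$, so using it here would be circular. The degenerate case $\bar W_n=0$ and the $2$-adic content bookkeeping are sketched but not actually carried out, and the small cases $n=2,3$ fall outside even the conditional argument. In short, the essential content of the lemma is an upper bound on the ramification of $\zg$, and your proposal never produces one.
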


\begin{proof}
This follows from the fact that $K_2\not\subset K^\zg$; see for instance \cite[Chapter IV]{Serre}.
\end{proof}


\subsection{Torsion series over $\fp$}
We end this section with the construction of a torsion power series over $\fp$ of order $e$ that commutes with $\bar f$ and $\bar u$.

\begin{prop}\label{prop:rbarcommutes}
Suppose $f,u\in\snc(\zp)$ is a minimal commuting pair.  Then $\bar w$ commutes with $\bar{u}$. Moreover, $\bar{w}$ is torsion of order $e=e(\bar u)$. 
\end{prop}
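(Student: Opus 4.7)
The plan is to extract both properties from the abelianness of $\G$ and the order of $\zg$ in $\G$, by pushing the corresponding identities at $\pi_n$ through the accumulation-point machinery already developed in Lemma \ref{lemma:normalcylifts} and Lemma \ref{lemma:agreement}.

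First I would handle commutativity. Since $\G$ is abelian in both parity cases, $\zg \ug = \ug \zg$ as elements of $\G$, and applying Lemma \ref{lemma:leftright} at $\pi_n$ converts this into the pointwise identity $u \circ w_n(\pi_n) = w_n \circ u(\pi_n)$, valid for every $n$. Given $k > 0$, I would choose $\ell$ large enough that $w_{n_\ell}(x) \equiv w(x)$ mod $x^k$, then substitute at $\pi_{n_\ell}$ and at $u(\pi_{n_\ell})$, exploiting that $v_p(u(\pi_{n_\ell})) = v_p(\pi_{n_\ell})$ because $u'(0) \equiv 1$ mod $p$. This yields $(u \circ w)(\pi_{n_\ell}) \equiv (w \circ u)(\pi_{n_\ell})$ mod $\pi_{n_\ell}^k$ for all large $\ell$, and Lemma \ref{lemma:agreement} upgrades this to $\bar u \circ \bar w \equiv \bar w \circ \bar u$ mod $x^k$ in $\fp[\![x]\!]$. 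Letting $k$ grow gives the desired commutation.

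Next I would show $\bar w^{\circ e} = x$ using the fact that $\zg$ has order exactly $e$ in $\G$ in both cases: as generator of $Z_{p-1}$ when $p > 2$ and of $Z_2$ when $p = 2$. Thus $\zg^e(\pi_n) = \pi_n$, which by the identity $\zg^i(\pi_n) = w_n^{\circ i}(\pi_n)$ from Remark \ref{remark:realization} unpacks to $w_n^{\circ e}(\pi_n) = \pi_n$. Because composition by a power series in $(x)$ preserves $x$-adic congruences, the congruence $w_{n_\ell} \equiv w$ mod $x^k$ propagates through $e$-fold iteration to $w_{n_\ell}^{\circ e} \equiv w^{\circ e}$ mod $x^k$. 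Evaluating at $\pi_{n_\ell}$ then gives $w^{\circ e}(\pi_{n_\ell}) \equiv \pi_{n_\ell}$ mod $\pi_{n_\ell}^k$, and Lemma \ref{lemma:agreement} forces $\bar w^{\circ e}(x) = x$.

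To conclude that the order is exactly $e$ I would split on $p$. When $p > 2$, Lemma \ref{lemma:zetaaction} pins each leading coefficient of $w_n$ at the Teichm\"uller representative $\zeta_{p-1}$, so $\bar w'(0) = \bar \zeta_{p-1}$ is a primitive $(p-1)$-th root of unity in $\fp^\times$, and any $d$ with $\bar w^{\circ d} = x$ must satisfy $(p-1) \mid d$. When $p = 2$, Lemma \ref{lemma:p2torsion} gives $\bar w \equiv x + x^2$ mod $x^3$, so $\bar w \neq x$ while $\bar w^{\circ 2} = x$. The main technical obstacle throughout is synchronizing the $x$-adic topology, in which $\{w_{n_\ell}\}$ converges, with the $v_p$-valuation at which we substitute; the bridge is exactly Lemma \ref{lemma:agreement}, supported by the elementary fact that all the series involved preserve positivity of valuation and act as near-isometries on any fixed $\ip^m$.
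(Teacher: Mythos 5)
Your proposal is correct and follows essentially the same line of argument as the paper: use abelianness of $\G$ with Lemma \ref{lemma:leftright} to get the pointwise commutativity $u\circ w_n(\pi_n)=w_n\circ u(\pi_n)$, push through the convergent subsequence $\{w_{n_\ell}\}$, and apply Lemma \ref{lemma:agreement}; then use $\zg^e=1$ together with the realization identity $\zg^i(\pi_n)=w_n^{\circ i}(\pi_n)$ and Lemma \ref{lemma:agreement} to get $\bar w^{\circ e}=x$, pinning the exact order via $\bar w'(0)=\bar\zeta_{p-1}$ when $p>2$ and via Lemma \ref{lemma:p2torsion} when $p=2$. The only differences are cosmetic: you make explicit the two small verifications the paper leaves implicit (that substituting $u(\pi_{n_\ell})$ preserves the $x$-adic congruence since $u'(0)\in\zp^\times$, and that $w_{n_\ell}\equiv w$ mod $x^k$ propagates through $e$-fold composition), which is a useful clarification but not a different route.
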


\begin{proof}For $k>0$, pick $l$ such that if $\ell>l$ then $v_p(\pi_{n_\ell}^{k+1})\leq1$ and $v_x(w-w_{n_\ell})\geq k+1$. 

By Lemma \ref{lemma:leftright}, $\zg\ug(\pi_n)=u\circ w_n(\pi_n)$ and $\ug\zg(\pi_n)=w_n\circ u(\pi_n)$ for all $n$.
Recalling that $G$ is abelian, we have the following congruences mod $\pi_{n_\ell}^{k+1}$ for $\ell>l$. 
\begin{eqnarray*}
u\circ w(\pi_{n_\ell})&\equiv& u\circ w_n(\pi_{n_\ell})\\
&=&\zg\ug(\pi_{n_\ell})\\
&=&\ug\zg(\pi_{n_\ell})\\
&=& w_n\circ u(\pi_{n_\ell})\\
&\equiv&w\circ u(\pi_{n_\ell})
\end{eqnarray*}
By Lemma \ref{lemma:agreement}, $\bar u\circ\bar w\equiv\bar w\circ \bar u$ mod $x^{k+1}$.  Since $k$ was arbitrary, it follows that $\bar u\circ\bar w=\bar w\circ \bar u$ .

If $\ell>l$, then $\pi_{n_\ell}=\zg^e(\pi_{n_\ell})=w_n^{\circ e}(\pi_{n_\ell})\equiv w^{\circ e}(\pi_{n_\ell})$ mod $\pi_{n_\ell}^{k+1}$.
By Lemma \ref{lemma:agreement}, $\bar w^{\circ e}(x)\equiv x$ mod $x^{k+1}$.  Since $k$ was arbitrary, we see that the order of $\bar w$ divides $e$.  If $p>2$, then the order is exactly $p-1$ because $w'(0)$ is a primitive $p-1$ root of unity.  If $p=2$, then the order of $\bar w$ is $2$ by Lemma \ref{lemma:p2torsion}.
\end{proof}








\begin{cor}
Suppose $\theta\in\snc(\fp)$ is a torsion series of order $e$ that commutes with $\bar u$.  If $\theta'(0)=\bar w'(0)$ then $\theta=\bar w$.
\end{cor}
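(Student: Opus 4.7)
The plan is to combine the preceding corollary (that $N := \Normsep_\fp(\cala_{\bar u})$ is abelian) with Theorem \ref{thm:LMS} (that $|N/\cala_{\bar u}|$ divides $e$) and the fact that $\bar u$ is nontorsion in $\snc(\fp)$, since $e(\bar u) < \infty$ forces $i_n(\bar u) < \infty$ for all $n$. Both $\bar w$ and $\theta$ commute with $\bar u$, so both lie in $N$; the strategy is to identify the image of $\theta$ in the quotient $Q := N/\cala_{\bar u}$ with the image of $\bar w$, deduce that $\theta = \bar u^{\circ t} \circ \bar w$ for some $t \in \zp$, and then use the torsion order of $\theta$ to force $t = 0$.

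First I would show that the image of $\bar w$ in $Q$ has order exactly $e$. Since $\bar u$ is nontorsion, $\cala_{\bar u}$ is torsion-free, so a relation $\bar w^{\circ k} = \bar u^{\circ s}$ with $0 < k < e$ is impossible: $s = 0$ would contradict $\bar w$ having order $e$, while $s \neq 0$ would make the torsion element $\bar w^{\circ k}$ equal to a nontorsion element. Combined with $|Q| \mid e$ from Theorem \ref{thm:LMS}, this shows $Q$ is cyclic of order exactly $e$, generated by the image of $\bar w$. The same argument applied to $\theta$ shows its image in $Q$ is another generator, yielding an expression $\theta = \bar u^{\circ t} \circ \bar w^{\circ k}$ with $\gcd(k,e) = 1$.

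Next I would compare linear coefficients: $\theta'(0) = \bar w'(0)^k$. For $p > 2$, $\bar w'(0)$ is a primitive $(p{-}1)$-th root of unity in $\fp^\times$ (by the proof of Proposition \ref{prop:rbarcommutes}, which uses Lemma \ref{lemma:zetaaction}), so the hypothesis $\theta'(0) = \bar w'(0)$ forces $k \equiv 1 \pmod{e}$. For $p = 2$, having $k$ coprime to $e = 2$ already means $k \equiv 1 \pmod e$. In either case $\bar w^{\circ k} = \bar w$, so $\theta = \bar u^{\circ t} \circ \bar w$.

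Finally, since $N$ is abelian the elements $\bar u$ and $\bar w$ commute, and I would compute
\[
\theta^{\circ e} \;=\; \bar u^{\circ te} \circ \bar w^{\circ e} \;=\; \bar u^{\circ te}.
\]
Since $\theta^{\circ e} = x$ and $\bar u$ is nontorsion, this forces $te = 0$ in $\zp$, hence $t = 0$ and $\theta = \bar w$. The main subtlety, I expect, is the verification that $\bar w$ has order exactly $e$ (not merely dividing $e$) in $Q$; once this torsion-versus-torsion-free dichotomy is pinned down, everything else is a routine manipulation inside the abelian group $N$.
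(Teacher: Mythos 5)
Your argument is correct, and it uses exactly the ingredients the paper has set up: that $\Normsep_\fp(\cala_{\bar u})$ is abelian, that $\cala_{\bar u}\cong\zp$ is torsion-free because $e(\bar u)<\infty$ forces $\bar u$ to be nontorsion, and that the quotient $Q$ has order dividing $e$ by Theorem~\ref{thm:LMS}. The one place your write-up is slightly more roundabout than necessary is the final step: rather than first decomposing $\theta=\bar u^{\circ t}\circ\bar w^{\circ k}$ and then killing $t$ at the end, one can observe directly that in the abelian group $\Normsep_\fp(\cala_{\bar u})$ the torsion elements form a subgroup which injects into $Q$ (since $\cala_{\bar u}$ is torsion-free), hence has order at most $e$; as it contains $\bar w$ of order $e$ it is exactly $\langle\bar w\rangle$, so $\theta=\bar w^{\circ k}$ with no $\bar u$-part to worry about, and the linear-coefficient comparison finishes it. That said, your version and this one are the same argument packaged differently --- your terminal computation $\theta^{\circ e}=\bar u^{\circ te}$ (using that $\bar u$ and $\bar w$ commute, which is Proposition~\ref{prop:rbarcommutes} and doesn't even require the full abelianness of the normalizer) is a perfectly good way to eliminate the $\cala_{\bar u}$-ambiguity.
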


Therefore, $\bar w$ is independent of the choice of $\{w_n\}$.

By \cite[Corollary 6.2.1]{Lubin:Nads}, the noninvertible half of a commuting pair must be of the form $\bar f(x)=\varphi(x^{p^d})$ for some $d>0$ and $\varphi\in\kp[\![x]\!]$ invertible; $d$ is called the {\sl radicial degree} of $f$.  Since our commuting pair is minimal, then $d=1$.

\begin{cor}\label{cor:barwandvarphicommute}
The power series $\bar w$ and $\varphi$ commute.
\end{cor}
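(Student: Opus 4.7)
My plan is to first establish that $\bar w$ commutes with $\bar f$ and then to pass from $\bar f$ to $\varphi$ using the factorization $\bar f(x)=\varphi(x^p)$ together with the Frobenius identity in characteristic $p$. More precisely, since the coefficients of $\bar w$ lie in $\fp$, I have $\bar w(x)^p=\bar w(x^p)$, so once $\bar f\circ\bar w=\bar w\circ\bar f$ is known, the computation
\[
\varphi(\bar w(x^p))=\varphi(\bar w(x)^p)=\bar f(\bar w(x))=\bar w(\bar f(x))=\bar w(\varphi(x^p))
\]
and matching of coefficients of $x^p$ gives $\varphi\circ\bar w=\bar w\circ\varphi$.

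To show $\bar f\circ\bar w=\bar w\circ\bar f$, I will mimic the argument of Proposition \ref{prop:rbarcommutes}, replacing the commutativity relation $\ug\zg=\zg\ug$ in $\G$ by the identity $\zg(f(\pi_{n+1}))=f(\zg(\pi_{n+1}))$, which holds because $\zg\in\G$ fixes $\qp$ and $f\in\zp[\![x]\!]$. Translating through the realization $\{w_n\}$ of $\zg$ as in Remark \ref{remark:realization}, this becomes the pointwise identity
\[
f\circ w_{n+1}(\pi_{n+1})=w_n\circ f(\pi_{n+1})\qquad\text{in }K_{n+1},\text{ for every }n.
\]

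I then plan to use the preceding corollary (uniqueness of the order-$e$ torsion series in $\snc(\fp)$ commuting with $\bar u$ and with prescribed leading coefficient) to conclude that every accumulation point of $\{w_n\}$ in $\Gamma$ must reduce to $\bar w$ and hence equals a single distinguished Teichm\"uller lift $w$. By compactness of $\Gamma$ this forces the entire sequence $\{w_n\}$, and therefore the shifted sequence $\{w_{n+1}\}$, to converge to $w$. Given $k>0$ and $n$ large enough, I can then replace both $w_n$ and $w_{n+1}$ by $w$ modulo $x^{k+1}$ and, using $v_p(\pi_{n+1}^{k+1})\le 1$ exactly as in the proof of Proposition \ref{prop:rbarcommutes}, convert the displayed identity into the congruence $f\circ w(\pi_{n+1})\equiv w\circ f(\pi_{n+1})\pmod{\pi_{n+1}^{k+1}}$. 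An application of Lemma \ref{lemma:agreement} then yields $\bar f\circ\bar w\equiv\bar w\circ\bar f\pmod{x^{k+1}}$, and letting $k$ grow proves the commutativity.

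The main obstacle is organizing the approximation step so that both $w_n$ and $w_{n+1}$ are simultaneously controlled by a common limit $w$; this is exactly the service rendered by the preceding uniqueness corollary, without which one would only control one of the two factors at a time. The passage from $\bar f\circ\bar w=\bar w\circ\bar f$ to $\varphi\circ\bar w=\bar w\circ\varphi$ is essentially a formal manipulation once the Frobenius identity $\bar w(x)^p=\bar w(x^p)$ is available and we know that the radicial degree equals $1$.
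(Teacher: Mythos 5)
Your argument is correct, but it takes a different route from what the paper has in mind. The paper's (implicit) proof is essentially a one-liner: since $\bar w$ commutes with $\bar u$ (Proposition~\ref{prop:rbarcommutes}) and $\varphi$ commutes with $\bar u$ (it inherits this from $\bar f\circ\bar u=\bar u\circ\bar f$ via the same Frobenius trick you use), both $\bar w$ and $\varphi$ lie in $\Normsep_\fp(\cala_{\bar u})$, and that group was shown to be abelian in the corollary two statements earlier; hence they commute. Your proof instead establishes $\bar f\circ\bar w=\bar w\circ\bar f$ directly from the $f$-consistency of $\{\rho_n\}$ and then passes to $\varphi$ by Frobenius. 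The key extra observation you need --- that the full sequence $\{w_n\}$ converges, not just some subsequence, so that $w_n$ and $w_{n+1}$ can be simultaneously replaced by $w$ --- is handled correctly: any accumulation point has reduction that is order-$e$ torsion commuting with $\bar u$ and with the prescribed leading coefficient (from Lemma~\ref{lemma:zetaaction} or Lemma~\ref{lemma:p2torsion}), so by the uniqueness corollary and the fact that $\Gamma$ consists of Teichm\"uller lifts, all accumulation points coincide, and a sequence in a compact metrizable space with a unique accumulation point converges. The estimate $v(f(A)-f(B))\ge v(A-B)$ for $A,B\in\ipalg$ (which you invoke implicitly when replacing $w_{n+1}$ by $w$ inside $f$) holds because $f\in\zp[\![x]\!]$ and $f(0)=0$, so the replacement step is sound. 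What the paper's approach buys is brevity and a conceptual explanation (everything relevant sits in one abelian group); what yours buys is an explicit, self-contained verification of the commutation relation $\bar f\circ\bar w=\bar w\circ\bar f$, which may be useful in its own right since $\bar f$, unlike $\bar u$, is noninvertible and thus does not sit inside $\Normsep_\fp(\cala_{\bar u})$.
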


If $p=2$, then $\bar w\in\Nott(\f_2)$.  Torsion elements of the Nottingham group are well understood and well behaved.  For instance, all torsion elements of $\Nott(\fp)$ have order $p^d$ for some $d$.  Moreover, since any pro-$p$ group can be embedded in $\Nott(\fp)$, then in fact there exists a torsion element of order $p^d$ for every $d$.  Two torsion elements of order $p$, $x+ax^\ell+\cdots$ and $x+bx^m+\cdots$, are conjugate over $\Nott(\fp)$ if and only if $a=b$ and $\ell=m$. The situation is slightly less tidy for $d>1$.  In fact, for some time, ``an element of order $p^2$ [was] still not known'' (\cite[page ]{Camina}).  In \cite{Lubin:Nott}, the results of \cite{Klopsch} are generalized via local-class-field-theoretic methods that associate to torsion elements of $\Nott(\fp)$ certain characters on $1+x\fp[\![x]\!]$, and explicit elements of any order are exhibited.

\begin{cor}\label{cor:Klopsch}
Suppose $p=2$.  Then $w(x)$ is conjugate over $\gnc(\fp)$ to $\sum_{i=1}^\infty x^i$.
\end{cor}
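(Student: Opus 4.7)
The plan is to reduce the corollary to Klopsch's classification of order-$p$ torsion elements of $\Nott(\fp)$, which is explicitly cited in the discussion immediately preceding the statement. Since $\gnc(\fp) \supset \Nott(\fp)$, it suffices to exhibit the conjugating element inside $\Nott(\f_2)$.

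First I would check that the target series $s(x) := \sum_{i=1}^\infty x^i = x/(1-x)$ lies in $\Nott(\f_2)$ and has order exactly $2$. A direct computation gives
\[
s \circ s(x) \;=\; \frac{s(x)}{1 - s(x)} \;=\; \frac{x}{1 - 2x},
\]
which over $\f_2$ simplifies to $x$; since $s(x) \neq x$, its order is exactly $2$. In particular $s(x) \equiv x + x^2 \pmod{x^3}$.

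Next I would collect the corresponding data for $\bar w$. By Proposition \ref{prop:rbarcommutes}, $\bar w$ is torsion of order $e = 2$; its linear coefficient (a primitive square root of unity) equals $1$ in $\f_2$, so $\bar w \in \Nott(\f_2)$. By Lemma \ref{lemma:p2torsion}, $\bar w(x) \equiv x + x^2 \pmod{x^3}$, so $\bar w$ and $s$ agree through their $x^2$ terms.

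Finally I would invoke Klopsch's classification, quoted just before the corollary: two order-$p$ torsion elements of $\Nott(\fp)$ of the respective forms $x + a x^\ell + \cdots$ and $x + b x^m + \cdots$ are conjugate in $\Nott(\fp)$ if and only if $a = b$ and $\ell = m$. With $p = 2$, both $\bar w$ and $s$ have leading nontrivial term $x^2$ with coefficient $1$, so the hypotheses are satisfied and $\bar w$ is conjugate to $s$ in $\Nott(\f_2) \subset \gnc(\f_2)$. I do not foresee a real obstacle here: the substantive work for this corollary has already been done in establishing the order of $\bar w$ and its second coefficient, and the statement itself is essentially a one-line appeal to Klopsch.
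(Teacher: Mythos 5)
Your proof is correct and matches the paper's approach: the paper's own proof is a one-line citation of Klopsch, and you have simply made explicit the verifications (that $s(x) = \sum_{i\geq 1} x^i = x/(1-x)$ has order $2$ in $\Nott(\f_2)$, and that $\bar w$ has order $2$ with $\bar w \equiv x + x^2 \bmod x^3$) needed to invoke that classification. Note that the statement as printed reads ``$w(x)$,'' but as your reading correctly assumes this should be $\bar w(x)$, since the conjugacy takes place in $\gnc(\f_2)$.
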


\begin{proof}
This follows directly from \cite[Theorem ]{Klopsch}.
\end{proof}



\section{Proof of main result}

Recall from Remark \ref{remark:proofplan} that if $\zeta_e$ is a primitive $e^\mathrm{th}$ root of unity, then $z(x)=\sum_{i=1}^\infty d_ix^i=[\zeta_e]_f(x)=L_f^{\circ -1}\left(\zeta_e L_f(x)\right)\in\gnc(\qp)$ is the unique $e$-torsion series with linear coefficient $d_1=\zeta_e$ that commutes with $f$ and $u$.   Let $z_k(x)=\sum_{i=1}^k d_ix^i$ and continue to write $f(x)=\sum_{i=1}^\infty a_ix^i$.  Clearly, $z_1(x)=\zeta_e x\in\gnc(\zp)$ and $z_1\circ f\equiv f\circ z_1$ mod $x^2$.  Moreover, if $p=2$ and $z_2(x)=-x+d_2x^2$, then $f\circ z_2\equiv z_2\circ f$ mod $x^3$ implies $d_2=2a_2/(a_1^2-a_1)\in\z_2^\times$.

The proof of our main result, that $z\in\gnc(\zp)$, will proceed inductively. Let $\delta=1$ if $p>2$ and $\delta=2$ if $p=2$.  Suppose for some $k\geq \delta$ that $z_k\in\gnc(\zp)$ and $z_k\circ f\equiv f\circ z_k$ mod $x^{k+1}$. For $z_{k+1}(x)=z_k(x)+d_{k+1}x^{k+1}$ to commute with $f(x)$ mod $x^{k+2}$, we must have 
$$d_{k+1}(a_1^{k+1}-a_1)x^{k+1}\equiv f\circ z_k(x)-z_k\circ f(x)\quad \mathrm{mod}\;x^{k+2}$$ Therefore, our proof will be completed once we show that $\bar f\circ \bar z_k(x)-\bar z_k\circ \bar f(x)\equiv 0$ mod $x^{k+2}$ in Proposition \ref{prop:lastone} below.

Recall that $\{\pi_n\}$ is a fixed $f$-consistent sequence, $\zg\in\G$ is an automorphism of order $e$, and $\rho_n=\zg(\pi_n)$ is $f$-consistent as well. The proof of the main result will rely on the relationship between the valuation of $\rho_n-g(\pi_n)$ for some $g\in\gnc(\zp)$ and the extent to which the series $g$ commutes with $f$. We explore that relation in more detail in the next three lemmas.

\begin{lemma}\label{lemma:rhoandpi}
Suppose for some $n>0$, $\rho_n\equiv g(\pi_n)+c_{m,n}\pi_n^m$ mod $\pi_n^{m+1}$, where $g\in\zp[\![x]\!]$ and $c_{m,n}\in\zp^\times$. Suppose further than $f\circ g\equiv g\circ f$ mod $x^{k+1}$.  If $m\neq p^{n-1}$ then $mp\geq \min\{v_{K_n}(\rho_{n-1}-g(\pi_{n-1})),k+1\}$.  The inequality is strict  if $v_{K_n}(a_p \pi_n^{mp})>v_{K_n}(a_1\pi_n^m)$.
\end{lemma}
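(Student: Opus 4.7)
The plan is to express $\rho_{n-1} - g(\pi_{n-1})$ as a Taylor remainder of $f$ plus a commutation defect, then bound the dominant valuation. By $f$-consistency, $\pi_{n-1} = f(\pi_n)$ and $\rho_{n-1} = f(\rho_n)$; setting $y := g(\pi_n)$ and $\epsilon := \rho_n - y$ (so $v_{K_n}(\epsilon) = m$ and $v_{K_n}(y) = 1$ since $g$'s linear coefficient is a unit in $\zp$), I would write
\[
\rho_{n-1} - g(\pi_{n-1}) \;=\; \underbrace{\bigl[f(y+\epsilon) - f(y)\bigr]}_{A} \;-\; \underbrace{\bigl[g(f(\pi_n)) - f(g(\pi_n))\bigr]}_{B},
\]
where $v_{K_n}(B) \geq k+1$ by the commutation hypothesis $f\circ g\equiv g\circ f\pmod{x^{k+1}}$. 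The lemma then reduces to proving the central claim $v_{K_n}(A) \leq mp$, with strict inequality in the stated case: if $k+1 \leq mp$, the conclusion is immediate from the minimum; otherwise $v_{K_n}(B) > mp \geq v_{K_n}(A)$, so $v_{K_n}(\rho_{n-1}-g(\pi_{n-1})) = v_{K_n}(A) \leq mp$, and strictness carries over.

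To prove the central claim, I would expand
\[
A \;=\; \sum_{j \geq 1} h_j(y)\,\epsilon^j, \qquad h_j(y) \;=\; \sum_{i \geq j}\binom{i}{j} a_i\, y^{i-j},
\]
and isolate two distinguished contributions. Using $v_p(a_p) = 0$ (from $\wideg(f) = p$) and $v_{K_n}(y) = 1$, the $\epsilon^p$-coefficient is $h_p(y) = a_p + O(y)$ with $v_{K_n}(h_p(y)) = 0$, contributing at valuation exactly $mp = v_{K_n}(a_p \pi_n^{mp})$. Using $v_p(a_1) = 1$ (from minimality), the $(1,1)$ contribution $a_1\epsilon$ sits at valuation $(p-1)p^{n-1} + m = v_{K_n}(a_1 \pi_n^m)$. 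These two valuations coincide exactly when $m = p^{n-1}$; under the hypothesis $m \neq p^{n-1}$ they differ, and one strictly dominates.

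A term-by-term check against the remaining summands—using $v_p(\binom{p}{j}) = 1$ for $1 \leq j < p$, $v_p(a_i) \geq 1$ for $2 \leq i < p$, and $v_{K_n}(y^{i-j}) = i-j$—confirms that the minimum of the two distinguished valuations is achieved without cancellation from elsewhere in the double sum. Therefore $v_{K_n}(A) \leq \min\{(p-1)p^{n-1} + m,\, mp\} \leq mp$, with strict inequality exactly when $(p-1)p^{n-1} + m < mp$, equivalently $m > p^{n-1}$, which is the stated condition $v_{K_n}(a_p\pi_n^{mp}) > v_{K_n}(a_1\pi_n^m)$.

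The main obstacle will be the term-by-term bookkeeping: ruling out pathological contributions $(i, j)$ with $i > p$ (where $a_i$ may be a unit) that could a priori match one of the two principal valuations. The hypothesis $m \neq p^{n-1}$ keeps those two principal terms at distinct valuations, so no cancellation between them occurs; the remaining terms—each carrying either a $p$-divisible combinatorial factor $\binom{p}{j}$, a $p$-divisible coefficient $a_i$ for $2 \leq i < p$, or a strictly positive $y$-power when $i > j$—will need to be shown to fall strictly above the principal minimum, and the case analysis for $n$ large (so that $e_n = (p-1)p^{n-1}$ dwarfs the polynomial degrees in play) is the delicate portion.
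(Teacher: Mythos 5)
Your decomposition is the same one the paper uses: you write $\rho_{n-1}-g(\pi_{n-1}) = \bigl[f(\rho_n)-f(g(\pi_n))\bigr] - \bigl[g(f(\pi_n))-f(g(\pi_n))\bigr]$, bound the commutation defect by $k+1$, and extract the two dominant contributions $a_1\epsilon_0$ and $a_p\epsilon_0^p$ from the binomial expansion of $f(\rho_n)-f(g(\pi_n))$, finishing with the isosceles triangle principle exactly as in the paper.

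The gap is precisely the step you flag as ``the delicate portion.'' The toolkit you list --- $v_p\bigl(\binom{p}{j}\bigr)\geq 1$, $v_p(a_i)\geq 1$ for $2\leq i<p$, positivity of $v_{K_n}(y^{i-j})$ --- is not enough to rule out the contributions you worry about with $i>p$. Take $(i,j)=(p+1,1)$: the term is $(p+1)a_{p+1}\,y^{p}\epsilon$, of valuation $v_{K_n}(a_{p+1})+p+m$. If $a_{p+1}$ were a unit, this sits at valuation $p+m$, which is strictly \emph{below} $mp=\min$ already for $p=3$, $n=2$, $m=2$ (where $p+m=5<6=mp$). Nothing in your list forces this term out of the way, and it would spoil the ``no cancellation'' conclusion. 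The missing fact is that $\bar f(x)=\varphi(x^{p})$ with $\varphi$ invertible (Lubin, \cite[Corollary 6.2.1]{Lubin:Nads}, combined with $\wideg(f)=p$), so that $a_i\in p\zp$ for \emph{every} $i$ with $p\nmid i$, not just $2\leq i<p$. That extra factor of $p$ on $a_{p+1}$ (and on every $a_i$ with $p\nmid i$) is exactly what pushes these terms into the $D_1\,p\,\pi_n^{m+1}$ bucket in the paper's expansion $f(\rho_n)=f(g(\pi_n))+a_1c_{m,n}\pi_n^m+a_p(c_{m,n}\pi_n^m)^p+D_1p\pi_n^{m+1}+D_2(\pi_n^{m+1})^p$; the paper uses it silently, but without it the claimed form of that expansion is false for $n\geq 2$. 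Once you have $a_i\in p\zp$ for $p\nmid i$, the remaining bookkeeping (including the $p\mid i$, $i>p$ cases, handled via Kummer's theorem on $\binom{i}{j}$) does close as you expect.
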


\begin{proof}
Write $\rho_n=g(\pi_n)+c_{m,n}\pi_n^m+D\pi_n^{m+1}$ for some $D\in\zp[\![\pi_n]\!]$. Then
\begin{eqnarray*}
f(\rho_n)&=&\sum_{i=1}^\infty a_i(g(\pi_n)+c_{m,n}\pi_n^m+d\pi_n^{m+1})^i\\
&=&f(g(\pi_n))+a_1c_{m,n}\pi_n^m+a_p(c_{m,n}\pi_n^m)^p+D_1p\pi_n^{m+1}+D_2(\pi_n^{m+1})^p
\end{eqnarray*}
where $D_1,D_2\in\zp[\![\pi_n]\!]$.
If $m\neq p^{n-1}$ then $p^{n-1}(p-1)+m=v_{K_n}(a_1c_{m,n}\pi_n^m)\neq v_{K_n}(a_p(c_{m,n}\pi_n^m)^p)=mp$, and so by the isosceles triangle principle $v_{K_n}(f(\rho_n)-f(g(\pi_n)))=v_{K_n}(a_1c_{m,n}\pi_n^m+a_p(c_{m,n}\pi_n^m)^p)$.  But $f(\rho_n)-f(g(\pi_n))=\rho_{n-1}-g(\pi_{n-1})+M\pi_n^{k+1}$ for some $M\in\zp[\![\pi_n[\!]$.
Therefore,
\begin{eqnarray*}
mp&\geq&\min\{p^{n-1}(p-1)+m,mp\}\\
&=&v_{K_n}(a_1c_{m,n}\pi_n^m+a_p(c_{m,n}\pi_n^m)^p)\\
&=&v_{K_n}(f(\rho_n)-f(g(\pi_n)))\\
&=&v_{K_n}(\rho_{n-1}-g(\pi_{n-1})+M\pi_n^{k+1})\\
&\geq&\min\{v_{K_n}(\rho_{n-1}-g(\pi_{n-1})),k+1\}
\end{eqnarray*}
\end{proof}

\begin{lemma}\label{lemma:j}
Suppose $g\in\gnc(\zp)$ 
and $n\geq\delta$. For each $c\in\zp^\times$ there exists a unique integer $1\leq j\leq p-1$ depending on $\bar c$, $\overline{g'(0)}$, and $n$ such that $g\circ u^{\circ jp^{n-\delta}}(\pi_{n+1})-g(\pi_{n+1})\equiv c\pi_{n+1}^{p^n}$ mod $\pi_{n+1}^{p^n+1}$.
\end{lemma}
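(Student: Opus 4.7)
The plan is to reduce the statement to an understanding of how the element $\sigma := \ug_{n+1}^{p^{n-\delta}}$ of $\G_{n+1}$ acts on $\pi_{n+1}$. By Lemma \ref{lemma:cyclicsubgroups} and Proposition \ref{prop:countingroots}, $\sigma$ restricts to the identity on $K_n$ and has order $p$, so it generates the cyclic group $\Gal(K_{n+1}/K_n)$. Writing $h(x) = u^{\circ p^{n-\delta}}(x) - x$, so that $\sigma^j(\pi_{n+1}) = u^{\circ jp^{n-\delta}}(\pi_{n+1})$, the heart of the proof is to produce a unit $\mu \in \zp^\times$, depending only on $n$, for which
\[
\sigma^j(\pi_{n+1}) \equiv \pi_{n+1} + j\mu \pi_{n+1}^{p^n} \pmod{\pi_{n+1}^{p^n+1}}
\]
for every $j \in \z$.

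The base case $j=1$ is to show $h(\pi_{n+1}) \equiv \mu \pi_{n+1}^{p^n}$ mod $\pi_{n+1}^{p^n+1}$. By Proposition \ref{prop:countingroots} the roots of $h$ in $\ipalg$ are exactly $\Llu(n-\delta) = \Llf(n)$, a set of $p^n$ elements. Applying WPT to write $h = PU$, Lemma \ref{lemma:totram} gives $v_{K_{n+1}}(\alpha) \geq p > 1 = v_{K_{n+1}}(\pi_{n+1})$ for every nonzero $\alpha \in \Llf(n)$, so $v_{K_{n+1}}(\pi_{n+1} - \alpha) = 1$ in every case. Expanding
\[
P(\pi_{n+1}) = \prod_{\alpha \in \Llf(n)}(\pi_{n+1} - \alpha) = \pi_{n+1}^{p^n}\prod_{\alpha \neq 0}\bigl(1 - \alpha/\pi_{n+1}\bigr)
\]
and observing that each factor on the right is $\equiv 1 \pmod{\pi_{n+1}}$ gives $P(\pi_{n+1}) \equiv \pi_{n+1}^{p^n}$ mod $\pi_{n+1}^{p^n+1}$, so $h(\pi_{n+1}) \equiv U(0)\pi_{n+1}^{p^n}$. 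Set $\mu := U(0)$, which lies in $\zp^\times$ because $U$ is a unit in $\zp[\![x]\!]$.

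For the inductive step I use $\sigma^{j+1}(\pi_{n+1}) - \sigma^j(\pi_{n+1}) = h(\sigma^j(\pi_{n+1}))$, which follows from the commutativity of $\sigma$ with power series over $\zp$. Since $u'(y) \equiv 1 \pmod{p}$, the product formula $(u^{\circ p^{n-\delta}})'(\pi_{n+1}) = \prod_{i=0}^{p^{n-\delta}-1} u'(u^{\circ i}(\pi_{n+1}))$ yields $h'(\pi_{n+1}) \equiv 0 \pmod{\pi_{n+1}}$. Combined with the inductive hypothesis $v_{K_{n+1}}(\sigma^j(\pi_{n+1}) - \pi_{n+1}) \geq p^n$, the Taylor expansion of $h$ around $\pi_{n+1}$ gives $v_{K_{n+1}}(h(\sigma^j(\pi_{n+1})) - h(\pi_{n+1})) \geq p^n + 1$, so $\sigma^{j+1}(\pi_{n+1}) \equiv \sigma^j(\pi_{n+1}) + \mu\pi_{n+1}^{p^n}$ modulo $\pi_{n+1}^{p^n+1}$ and the induction closes.

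Finally, the Taylor expansion of $g$ around $\pi_{n+1}$, together with $g'(\pi_{n+1}) \equiv g'(0)\pmod{\pi_{n+1}}$ and the observation that the quadratic term has $v_{K_{n+1}} \geq 2p^n$, yields
\[
g(\sigma^j(\pi_{n+1})) - g(\pi_{n+1}) \equiv g'(0)\, j\mu\, \pi_{n+1}^{p^n} \pmod{\pi_{n+1}^{p^n+1}}.
\]
Matching this with $c\pi_{n+1}^{p^n}$ is equivalent to the scalar congruence $g'(0) j \mu \equiv c \pmod{p}$, which has a unique solution $j \in \{1,\ldots,p-1\}$ since $g'(0)$, $\mu$, and $c$ all lie in $\zp^\times$; evidently $j$ depends only on $\bar c$, $\overline{g'(0)}$, and $\bar\mu$, and the last is a function of $n$. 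The main obstacle is the valuation bookkeeping needed both to identify $\mu$ as a unit and to show $h'(\pi_{n+1})$ is a nonunit; both are handled uniformly by the data on $\Llf(n)$ supplied by Lemma \ref{lemma:totram}.
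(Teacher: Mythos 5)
Your proof is correct and takes essentially the same route as the paper: both rest on the fact, supplied by Proposition \ref{prop:countingroots} together with Lemma \ref{lemma:samenewton}, that $\wideg(u^{\circ p^{n-\delta}}(x)-x)=p^n$, from which one reads off $u^{\circ p^{n-\delta}}(\pi_{n+1})\equiv\pi_{n+1}+\mu\pi_{n+1}^{p^n}$ for a unit $\mu$, iterates to get the factor $j$, composes with $g$, and solves a congruence mod $p$. The only cosmetic difference is that you extract the unit $\mu$ as $U(0)$ from the Weierstrass factorization $h=PU$ and expand $P$ as a product over $\Llf(n)$, whereas the paper reads off the same unit $b_{p^n}$ directly from valuation bounds on the lower coefficients of $h$; and you spell out the Taylor-expansion steps that the paper compresses into \lq\lq a direct computation shows.\rq\rq
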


\begin{proof}
By Proposition \ref{prop:countingroots}, $\wideg(u^{\circ p^{n-\delta}}(x))=p^n$.  Write $u^{\circ p^{n-\delta}}(x)-x=\sum_{i=1}^\infty b_ix^i$.  Thus, $b_{p^n}\in\zp^\times$, and if $1\leq i<p^n$ then $v_{K_{n+1}}(b_i\pi_{n+1}^i)\geq p^n(p-1)+i>p^n$.
So $u^{\circ p^{n-\delta}}(\pi_{n+1})\equiv \pi_{n+1}+b_{p^n}\pi_{n+1}^{p^n}$ mod $\pi_{n+1}^{p^n+1}$.  A direct computation then shows $u^{\circ jp^{n-\delta}}(\pi_{n+1})\equiv \pi_{n+1}+jb_{p^n}\pi_{n+1}^{p^n}$ mod $\pi_{n+1}^{p^n+1}$. So the proof is complete by solving for $j$ in $jg'(0)b_{p^n}\equiv c$ mod $p$.
\end{proof}

\begin{lemma}\label{lemma:rhoandpiII}
Suppose $h\in\gnc(\zp)$ such that $h'(0)=\zeta_{p-1}$ if $p>2$ and $\bar h(x)\equiv x+x^2$ mod $x^3$ if $p=2$. If $f\circ h(x)-h\circ f(x)\equiv 0$ mod $x^{k+1}$, then there exists $\ell\in\zp$ such that  $v_{K_n}(\rho_n-h\circ u^{\circ \ell}(\pi_n))\geq(k+1)/p$ for all $n$.  The inequality is strict if $v_{K_n}(a_p \pi_n^{mp})>v_{K_n}(a_1\pi_n^m)$. Moreover, $v_{K_n}(\rho_n-h\circ u^{\circ \ell}(\pi_n))\neq p^{n-1}$.
\end{lemma}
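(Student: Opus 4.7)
My plan is to construct $\ell \in \zp$ one $p$-adic digit at a time, choosing each digit so that at level $n$ the valuation $v_n := v_{K_n}(\rho_n - h \circ u^{\circ \ell}(\pi_n))$ avoids the critical value $p^{n-1}$. The key consistency observation from Proposition~\ref{prop:countingroots} is that $u^{\circ p^{n-\delta}}(\pi_n) = \pi_n$, so adjusting $\ell$ by adding $jp^{n-\delta}$ leaves $u^{\circ \ell}(\pi_m)$ unchanged for every $m \leq n$; previously established valuations stay fixed while $v_{n+1}$ can still be tuned.

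For the inductive step, I assume the partial $\ell$ satisfies $v_m \geq (k+1)/p$ and $v_m \neq p^{m-1}$ for all $\delta \leq m \leq n$. Set $g = h \circ u^{\circ \ell}$, which inherits $f \circ g \equiv g \circ f$ mod $x^{k+1}$ since $u$ commutes with $f$ (so $(f \circ h - h \circ f) \circ u^{\circ \ell}$ remains divisible by $x^{k+1}$). Compute $v_{n+1}$ with trial digit $0$; if $v_{n+1} \neq p^n$, keep the digit. Otherwise, Lemma~\ref{lemma:j} supplies the unique $j \in \{1, \ldots, p-1\}$ for which replacing $\ell$ by $\ell + jp^{n-\delta}$ cancels the leading $\pi_{n+1}^{p^n}$ coefficient of $\rho_{n+1} - g(\pi_{n+1})$, forcing $v_{n+1} > p^n$. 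In either case $v_{n+1} \neq p^n$, so Lemma~\ref{lemma:rhoandpi} applied to the updated $g$ (with $m = v_{n+1}$) yields $v_{n+1} \cdot p \geq \min\{p v_n, k+1\}$, whence $v_{n+1} \geq (k+1)/p$. The strict-inequality refinement and the condition $v_n \neq p^{n-1}$ follow directly from the construction and Lemma~\ref{lemma:rhoandpi}. Taking the $p$-adic limit of the chosen digits produces the required $\ell$.

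For the base case at $n = \delta$, the matching of initial terms between $h(\pi_\delta)$ and $\rho_\delta$---from the hypothesis on $h$, Lemma~\ref{lemma:zetaaction}, and, when $p = 2$, Lemma~\ref{lemma:p2torsion}---yields $v_\delta \geq \delta + 1 > p^{\delta-1}$. Evaluating $f \circ h \equiv h \circ f$ mod $x^{k+1}$ at $\pi_\delta$, together with a first-order Taylor expansion of $f$ at $\rho_\delta$ and the computation $v_{K_\delta}(f'(\rho_\delta)) = (p-1)p^{\delta-1}$ from the Newton-polygon structure of $f$, gives $v_\delta \geq k + 1 - (p-1)p^{\delta-1}$. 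These two bounds combined cover all $k$: the Taylor bound dominates for $k \geq p^\delta - 1$, while the direct bound handles smaller $k$. When $p = 2$ I must additionally verify the auxiliary level $n = 1$ (on which $u$ acts trivially by Proposition~\ref{prop:countingroots}); the analogous level-$1$ commutativity argument yields $v_1 \geq k \geq (k+1)/2$, and $v_1 \geq 2 > 1 = p^{0}$.

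The main obstacle is the base-case analysis: reconciling the direct and Taylor bounds across the full range of $k$ requires care, and for $p = 2$ the mixed-characteristic terms in $h$ (factors of $2$ entangled with the $\pi_2$-adic expansion) demand scrupulous bookkeeping in the valuation estimates. Once the base case is secured, the inductive step is a clean synthesis of Lemmas~\ref{lemma:rhoandpi} and~\ref{lemma:j}, using the compatibility of the digit adjustments with the tower of fields $K_n$.
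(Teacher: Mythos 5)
Your overall plan --- build $\ell$ one $p$-adic digit at a time, use Proposition~\ref{prop:countingroots} to keep lower levels fixed while tuning level $n+1$, invoke Lemma~\ref{lemma:j} to cancel a leading coefficient and dodge the forbidden valuation $p^n$, then propagate the lower bound through Lemma~\ref{lemma:rhoandpi} --- is exactly the paper's argument, and your inductive step matches the paper's nearly line by line, including the observation that $f\circ g\equiv g\circ f \pmod{x^{k+1}}$ survives replacing $g$ by $g\circ u^{\circ jp^{n-\delta}}$.

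The one place you depart is the base case, and there you have made the problem harder than it needs to be. You propose to re-derive a Taylor bound $v_\delta \geq k+1-(p-1)p^{\delta-1}$ directly from the commutation congruence and combine it with the coarse bound $v_\delta\geq\delta+1$. That works, but it reproduces the content of Lemma~\ref{lemma:rhoandpi} from scratch, which is exactly why you flag the bookkeeping as the obstacle: the first-order Taylor term competes with the degree-$p$ term and with contributions from higher-degree coefficients of $f$, and for $p=2$ the claim $v_{K_\delta}(f'(\rho_\delta))=(p-1)p^{\delta-1}$ is not automatic (one must rule out cancellation against units such as $3a_3\rho_2^2$, which sits at the same valuation). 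The paper instead applies Lemma~\ref{lemma:rhoandpi} itself at $n=1$ with $\ell_1=0$: since $\rho_0 = h(\pi_0) = 0$, the quantity $v_{K_1}(\rho_0 - h(\pi_0))$ is infinite, the $\min$ collapses to $k+1$, and the strictness hypothesis $m>1=p^0$ is supplied by Lemma~\ref{lemma:fandzeta}; for $p=2$ the lemma is applied once more at $n=2$ using the level-$1$ bound and Lemma~\ref{lemma:p2torsion} (which gives $m>2$). Folding the base case into the very lemma that drives the induction eliminates the separate valuation analysis you anticipated struggling with, so I'd recommend that route.
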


\begin{proof}
We will construct a Cauchy sequence of integers $\{\ell_n\}$ for which $v_{K_s}(\rho_s-h\circ u^{\circ \ell_n}(\pi_s))\geq(k+1)/p$ whenever $s\leq n$.  

Let $\ell_1=0$. If $\rho_1=h(\pi_1)$ the result follows trivially. If not, write $\rho_1\equiv h(\pi_1)+c_{m,1}\pi_1^m$ mod $\pi_1^{m+1}$ for some $c_{m,1}\in\zp^\times$. Note that $m>1=p^0$ by Lemma \ref{lemma:fandzeta} and so $p-1+m<mp$. Therefore, by Lemma \ref{lemma:rhoandpi}, $mp>\min\{v_{K_1}(0),k+1\}= k+1$.

If $p=2$, let $\ell_2=0$ as well. The result follows trivially if $\rho_2=h(\pi_2)$.  Otherwise, write $\rho_2\equiv h(\pi_2)+c_{m,2}\pi_2^m$ mod $\pi_2^{m+1}$ for some $c_{m,2}\in\z_2^\times$.  Note that $m>2$ by Lemma \ref{lemma:p2torsion}, and so $2(2-1)+m<2m$.  Therefore, by Lemma \ref{lemma:rhoandpi}, $2m>\min\{v_{K_2}(\rho_1-h(\pi_1)),k+1\}= k+1$.

Suppose that for some $n\geq \delta+1$ there exists an integer $\ell_n$ for which $v_{K_s}(\rho_s-h\circ u^{\circ \ell_n}(\pi_s))\geq(k+1)/p$ whenever $s\leq n$.

If $\rho_{n+1}-h\circ u^{\circ \ell_n}(\pi_{n+1})\equiv c\pi_{n+1}^{p^n}$ mod $\pi_{n+1}^{p^n}$ for some $c\in\zp^\times$, then by Lemma \ref{lemma:j} there exists a $1\leq j_{n+1}\leq p-1$ such that $h\circ u^{\circ \ell_n}\circ u^{\circ j_{n+1} p^{n-\delta}}(\pi_{n+1})- h\circ u^{\circ \ell_n}(\pi_{n+1})\equiv c\pi_{n+1}^{p^n}$ mod $\pi_{n+1}^{p^n+1}$.  If on the other hand $v_{K_{n+1}}(\rho_{n+1}-h\circ u^{\circ \ell_n}(\pi_{n+1}))\neq p^n$, let $j_{n+1}=0$. Let $\ell_{n+1}=\ell_n+j_{n+1}p^{n-\delta}$.
If $\rho_{n+1}=h\circ u^{\circ \ell_{n+1}}(\pi_{n+1})$ then the result follows trivially. Otherwise, write $\rho_{n+1}-h\circ u^{\circ \ell_{n+1}}(\pi_{n+1})\equiv c_{m,n+1}\pi_{n+1}^m$ mod $\pi_{n+1}^m$ for some $c_{m,n+1}\in\zp^\times$. Observe that if $s< n+1$, then $u^{\circ p^{n-\delta}}(\pi_s)=\pi_s$ by Proposition \ref{prop:countingroots}, so that $u^{\circ \ell_{n+1}}(\pi_s)=u^{\circ \ell_{n}}(\pi_s)$.
The choice of $j_{n+1}$ guarantees that $m\neq p^n$.  Therefore, by Lemma \ref{lemma:rhoandpi}, $mp>\min\{v_{K_{n+1}}(\rho_n-h\circ u^{\circ \ell_{n+1}}(\pi_n)),k+1\}= \min\{v_{K_{n+1}}(\rho_n-h\circ u^{\circ \ell_n}(\pi_n)),k+1\}=k+1$.
\end{proof}

\begin{prop}\label{prop:lastone} Suppose $z_k\in\gnc(\zp)$ with $z_k'(0)=\zeta_e$ for some $k\geq \delta$.  If $p=2$, suppose that $\bar z_2(x)=x+x^2$. If $f\circ z_k(x)-z_k\circ f(x)\equiv 0$ mod $x^{k+1}$ then $\bar f\circ \bar z_k(x)-\bar z_k\circ \bar f(x)\equiv 0$ mod $x^{k+2}$.
\end{prop}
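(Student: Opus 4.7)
My plan has two parts. First, a Frobenius-type observation dispatches the case $p \nmid k+1$. Since $\bar z_k$ has coefficients in $\fp$ (fixed by Frobenius) and $\bar f(x) = \varphi(x^p)$, one has
\[\bar f \circ \bar z_k - \bar z_k \circ \bar f = \varphi\bigl(\bar z_k(x^p)\bigr) - \bar z_k\bigl(\varphi(x^p)\bigr),\]
which is a power series in $x^p$. In particular its coefficient of $x^{k+1}$ automatically vanishes whenever $p \nmid k+1$, settling that case.

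Second, for $p \mid k+1$, write $k+1 = pm$. It then suffices to show the $x^{pm}$-coefficient of $\mu := f \circ z_k - z_k \circ f$ lies in $p\zp$. I would apply Lemma \ref{lemma:rhoandpiII} with $h = z_k$ (whose hypotheses hold, since $z_k'(0) = \zeta_e$ and, when $p=2$, $\bar z_k \equiv x + x^2 \pmod{x^3}$ by construction) to obtain $\ell \in \zp$ such that $g := z_k \circ u^{\circ \ell}$ satisfies $m_n := v_{K_n}(\rho_n - g(\pi_n)) \geq m$ and $m_n \neq p^{n-1}$ for every $n$. Since $f$ and $u$ commute, $f \circ g - g \circ f = \mu \circ u^{\circ \ell}$; so if $\bar \mu_{pm} \neq 0$, then $v_{K_n}(\mu(u^{\circ \ell}(\pi_n))) = pm$ for all $n$.

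Arguing by contradiction, assume $\bar \mu_{pm} \neq 0$. For $n$ large enough that $p^{n-1} > m$, the expansion from the proof of Lemma \ref{lemma:rhoandpi} applied to $g$ and $\rho_n$, combined with the identity $f(g(\pi_n)) = g(\pi_{n-1}) + \mu(u^{\circ \ell}(\pi_n))$, yields
\[\rho_{n-1} - g(\pi_{n-1}) - \mu\bigl(u^{\circ \ell}(\pi_n)\bigr) = a_1 c_{m_n,n}\pi_n^{m_n} + a_p\bigl(c_{m_n,n}\pi_n^{m_n}\bigr)^p + (\text{higher order}),\]
with the $a_1$- and $a_p$-terms having distinct $v_{K_n}$-valuations because $m_n \neq p^{n-1}$. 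Comparing valuations forces $m_n = m$ exactly, and extracting the $\pi_n^{pm}$-residue on both sides yields a recursion among $\bar c_{m,n-1}$, $\bar a_p \bar c_{m,n}^p$, and $\bar \mu_{pm}$. I expect this recursion to contradict the construction of $\ell$ in the proof of Lemma \ref{lemma:rhoandpiII}, where the indices $\ell_n$ were chosen precisely to annihilate any $\pi_n^{p^{n-\delta}}$-contribution. The main obstacle is ruling out accidental cancellation at valuation $pm$ on the left — this is exactly what the condition $m_n \neq p^{n-1}$ in Lemma \ref{lemma:rhoandpiII} guarantees, and which pins down the leading residue of $\rho_{n-1} - g(\pi_{n-1})$ to be incompatible with $\bar \mu_{pm}$ being a unit.
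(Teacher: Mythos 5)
Your Frobenius observation for the case $p\nmid k+1$ is correct and clean: since $\bar z_k$ has $\fp$-coefficients, $\bar z_k(x)^p=\bar z_k(x^p)$, so $\bar f\circ\bar z_k-\bar z_k\circ\bar f=\varphi(\bar z_k(x^p))-\bar z_k(\varphi(x^p))$ is a power series in $x^p$ and its degree-$(k+1)$ coefficient vanishes automatically. This is essentially what the paper is invoking via Lubin's Corollary 6.2.1, and your version is arguably more transparent.

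For $p\mid k+1$, however, there is a genuine gap. You set things up exactly as the paper does (apply Lemma~\ref{lemma:rhoandpiII} to $h=z_k$, form $g=z_k\circ u^{\circ\ell}$, note $f\circ g - g\circ f=\mu\circ u^{\circ\ell}$, and extract a recursion), but then you ask this recursion alone to produce a contradiction with $\bar\mu_{pm}\neq 0$. It cannot. Working out the $\pi_n^{pm}$-residues (using that $c_{m,n}$ is a Teichm\"uller lift, so $c_{m,n}^p=c_{m,n}$) gives a \emph{linear} recursion of the form
\[
\bar c_{m,n}\equiv -\frac{\bar\mu_{pm}}{\bar a_p}+\bar a_p^{\,m-1}\,\bar c_{m,n-1}\pmod p,
\]
which is solvable for every value of $\bar\mu_{pm}\in\fp$, unit or not; there is no inconsistency with the choice of $\ell$ in Lemma~\ref{lemma:rhoandpiII}, since the condition $m_n\neq p^{n-1}$ only protects against cancellation at the isolated valuation $p^{n-1}$ and places no constraint at valuation $m$. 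Your expectation that the recursion ``pins down the leading residue \ldots to be incompatible with $\bar\mu_{pm}$ being a unit'' does not hold. The ingredient you are missing is the external input that $\bar w$ (the reduction of the accumulation point of the realization of $\zg$) commutes with $\varphi$ and with $\bar u$ --- that is, Proposition~\ref{prop:rbarcommutes} and Corollary~\ref{cor:barwandvarphicommute}, which rest on the field-of-norms result Theorem~\ref{thm:LMS}. The paper's proof is \emph{not} by contradiction: it iterates the recursion to write $\bar c_{m,n}$ as $(-\bar\mu_{pm}/\bar a_p)\cdot\sum_{j}\bar a_p^{(m-1)j}$, observes that this geometric sum vanishes mod $p$ for infinitely many $n$ (hence $c_{m,n}\equiv 0$ for infinitely many $n$ \emph{regardless} of $\bar\mu_{pm}$), invokes Lemma~\ref{lemma:agreement} to conclude $\bar w\equiv\bar z_k\circ\bar u^{\circ\ell}$ mod $x^{m+1}$, and only then uses $\varphi\circ\bar w=\bar w\circ\varphi$ and $\bar u\circ\bar w=\bar w\circ\bar u$ to deduce $\varphi\circ\bar z_k-\bar z_k\circ\varphi\equiv 0$ mod $x^{m+1}$, hence the claim mod $x^{pm+1}$ after substituting $x\mapsto x^p$. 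Without that commutativity step, $\bar\mu_{pm}$ simply does not get forced to zero.
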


\begin{proof} Write $f\circ z_k(x)-z_k\circ f(x)\equiv dx^{k+1}$ mod $x^{k+2}$ for some $d\in\zp$.

If $p\nmid k+1$ then $d\in p\zp$ by \cite[Corollary 6.2.1]{Lubin:Nads}.

If $p\mid k+1$, let $m=(k+1)/p$.  
For each $n$, Lemma \ref{lemma:rhoandpiII} allows us to write $\rho_n\equiv z_k\circ u^{\circ \ell}(\pi_n)+c_n\pi_n^m$ mod $\pi_n^{m+1}$ for some $\ell\in\zp$, where $c_n\in\zp$ is a Teichm\"uller representative.  Moreover, the choice of $\ell$ guarantees that if $m= p^{n-1}$, then $c_n= 0$.

Let $N$ be the least positive integer that satisfies $v_{K_N}(a_1\pi_N^m)>v_{K_N}(a_p\pi_N^{mp})$; note that $N\geq 2$. By Lemma \ref{lemma:rhoandpiII} again, $c_{N-1}=0$.  Moreover, for all $n\geq N$, we have mod $\pi_n^{mp+1}$\\
$\begin{array}[t]{rcl}
\rho_{n-1}&\equiv&z_k\circ u^{\circ \ell}(\pi_{n-1})+c_{n-1}\pi_{n-1}^m\\          
&=&z_k\circ u^{\circ \ell}(f(\pi_n))+c_{n-1}f(\pi_n)^m\\
&\equiv&z_k\circ u^{\circ \ell}(f(\pi_n))+c_{n-1}(a_p\pi_n^p)^m
\end{array}
\begin{array}[t]{rcl}
\rho_{n-1}&=&f(\rho_n)\\
&\equiv&f(z_k\circ u^{\circ \ell}(\pi_n)+c_n\pi_n^m)\\
&\equiv&f(z_k\circ u^{\circ \ell}(\pi_n))+a_p(c_n\pi_n^m)^p
\end{array}$

Therefore, $f(z_k\circ u^{\circ \ell}(\pi_n))+a_p(c_n\pi_n^m)^p\equiv z_k\circ u^{\circ \ell}(f(\pi_n))+c_{n-1}(a_p\pi_n^p)^m$ mod $\pi_n^{pm+1}$, and so
$$c_n\equiv \frac{z_k\circ u^{\circ \ell}(f(\pi_n))-f( z_k\circ u^{\circ \ell}(\pi_n))}{a_p}+c_{n-1}a_p^{m-1} \quad \mathrm{mod}\;\pi_n$$
Iterating this last congruence, we get
$$c_n\equiv \frac{z_k\circ u^{\circ \ell}(f(\pi_n))-f(z_k\circ u^{\circ \ell}(\pi_n))}{a_p}\sum_{i=N}^{n}a_p^{(m-1)(n-i)}\quad \mathrm{mod}\;\pi_n$$
But $\sum_{i=N}^{n}a_p^{(m-1)(n-i)}\equiv 0$ mod $p$ for infinitely many $n$, and so too $c_n\equiv 0$ mod $p$ for infinitely many $n$.  Thus by Lemma \ref{lemma:agreement}, $\bar w\equiv\bar z_k\circ \bar u^{\circ \ell}$ mod $x^{m+1}$.

Now recall that $\bar w$ commutes with $\varphi$ by Corollary \ref{cor:barwandvarphicommute}. So $0=\varphi\circ \bar w(x)-\bar w\circ \varphi(x)\equiv \varphi\circ\bar z_k\circ\bar u^{\circ \ell}(x)-\bar z_k\circ\bar u^{\circ \ell}\circ\varphi(x)$ mod $x^{m+1}$. And since $\bar u$ commutes with $\bar w$ by Proposition \ref{prop:rbarcommutes}, we also have $\varphi\circ\bar z_k(x)-\bar z_k\circ\varphi(x)$ mod $x^{m+1}$. Therefore
$$0\equiv \varphi\circ\bar z_k(x^p)-\bar z_k\circ\varphi(x^p)\quad\mathrm{mod}\;x^{mp+1},$$
yielding our desired result since $mp=k+1$ and $\varphi(x^p)=\bar f(x)$.\end{proof}


\end{document}